\newtheorem{lemma}{Lemma}
\newtheorem{theorem}{Theorem}
\newtheorem{assumption}{Assumption}
\newtheorem{remark}{Remark}
\def\rr{\mathbb{R}}
\def\cM{\mathcal{M}}
\def\cL{\mathcal{L}}
\def\cD{\mathcal{D}}
\def\cV{\mathcal{V}}
\def\cC{\mathcal{C}}
\def\cE{\mathcal{E}}
\def\eps{\varepsilon}
\newcommand{\ds}{\displaystyle}
\title{\vspace*{-4mm}Cooperative versus decentralized strategies\\in three-pursuer single-evader games}
\author{Marco Casini, Andrea Garulli\thanks{M.~Casini and A.~Garulli are with the Dipartimento di
		Ingegneria dell'Informazione e Scienze Matematiche, Universit\`a di Siena, via Roma~56, 53100 Siena, Italy. E-mail:casini@diism.unisi.it,~garulli@diism.unisi.it.}}
\date{}
\begin{document}
\maketitle \thispagestyle{empty} \pagestyle{empty}
	
\begin{abstract}
The value of cooperation in pursuit-evasion games is investigated. The considered setting is that of three pursuers chasing one evader in a planar environment. The optimal evader trajectory for a well-known decentralized pursuer strategy is characterized. This result is instrumental to derive upper and lower bounds to the game length, in the case in which the pursuers cooperate in the chasing strategy. It is shown that the cooperation cannot reduce the capture time by more than one half with respect to the decentralized case, and that such bound is tight. 
\end{abstract}

\begin{keywords}
	pursuit-evasion games, autonomous agents, cooperative control, differential games.
\end{keywords}	

\section{Introduction}\label{sec:intro}

Pursuit-evasion games involving many pursuers against a single evader have been intensively investigated for quite a long time.  
These problems fall within the broader class of differential games \cite{isaacs1965,Petrosyan1993}, for which a vast literature is available (see \cite{chernousko1985} and references therein). A distinctive trait of these games is that the union of the reachable sets of all pursuers is in general a nonconvex, or even disconnected, set. This makes the characterization of successful pursuit strategies nontrivial, even for simple problem settings, like the case of three pursuers chasing one evader in the plane \cite{pashkov1995}. More complex environments usually lead to very involved geometric conditions \cite{alexander2009}.

Recent years have witnessed a renewed interest towards cooperative capture games, motivated both by the variety of applications in which they play a key role and by the development of technologies allowing networks of agents to collaborate in the execution of complex tasks, see, e.g., \cite{vidal2002,bopardikar2008,chung2011,noori2016,Makkapati2018}.
An interesting aspect that has not been fully understood yet, concerns the benefits of the pursuers to cooperate in the capture game. A popular strategy in the literature is the one proposed in \cite{kopparty2005}, which can be seen as a specialization of classical differential game solutions, proposed for example in \cite{pshenichnyi1976,pshenichnyi1981}. Such strategies are fully decentralized, as each pursuer is aware only of the evader move but does not have any information about the other pursuers' behavior.
On the other hand, cooperative strategies in which pursuers have full or partial knowledge of the game state, have been recently proposed by several authors \cite{zhou2016,Ramana2017,kothari2017,Ramana2017b}. However, it is not easy to quantify the advantage of playing such strategies with respect to the fully decentralized ones, which do not require the use of a complex communication infrastructure.

In this paper, we consider the problem of three pursuers chasing one evader in a planar environment. The first contribution is a strategy guaranteeing the maximum survival time for the evader, when the pursuers adopt the decentralized strategy proposed in \cite{kopparty2005}. Then, upper and lower bounds for the game length are derived, assuming that the pursuers play in a cooperative way. The main result shows that, no matter how they collaborate, the pursuers cannot reduce the game length to less than one-half of that resulting from the decentralized strategy. Moreover, it is shown that the above bounds are tight, in the sense that there exist games in which the bounds are actually achieved. Finally, the maximum advantage deriving from cooperation is quantified for some specific game initial conditions.

The paper is structured as follows. In Section~\ref{sec:PEG}, the three-pursuer one-evader game is formulated. In Section~\ref{sec:decentralized}, the decentralized pursuit strategy proposed in \cite{kopparty2005} is recalled and the corresponding optimal evader's strategy is devised along with the resulting game length. In Section~\ref{sec:lower_bound}, a lower bound on capture time is derived, irrespectively of the strategy adopted by the pursuers. A comparison between the game duration for cooperative and decentralized pursuers' strategies is reported in Section~\ref{sec:strategy_comparison} while some examples of games are reported in Section~\ref{sec:example}. Conclusions and future developments are reported in Section~\ref{sec:conclusions}.

\section{Pursuit-evasion game}\label{sec:PEG}

\subsection{Notation}\label{sec:notation}
Let $\rr^n$ be the $n$-dimensional Euclidean space and $\|\cdot\|$ be the Euclidean norm. The transpose of a vector $v$ is denoted by $v'$. Let $V,W\in\rr^2$, we denote by $\overline{VW}$ the segment with $V$ and $W$ as endpoints. Let $V,v\in\rr^2$, we denote by $\cL(V,v)$ the line passing through $V$ with direction $v$, i.e.,
$$
\cL(V,v)=\{X\in\rr^2\colon X=V+\alpha v,\,\alpha\in\rr\}.
$$

\subsection{Problem formulation}
A pursuit-evasion game involving three pursuers is considered. It is assumed that the players move in an open and empty two-dimensional environment. Let $E(t)\in\rr^2$ and $P_i(t)\in\rr^2,~i=1,2,3,$ denote the evader and pursuers location at time $t$, respectively. The aim of the pursuers is to capture the evader, i.e., $P_i(t)=E(t)$ for at least one $i\in\{1,2,3\}$, at some time $t$.

The following assumptions are enforced throughout the text.
\begin{assumption}\label{ass:same_speed}
	The pursuers and the evader have the same speed, set to 1 without loss of generality. Moreover, we assume the players have simple motion, i.e., they can freely move in any direction.
\end{assumption}
\begin{assumption}\label{ass:inside_hull}
	The initial evader position is strictly inside the convex hull of the pursuers.
\end{assumption}
If the players move at the same speed, enforcing Assumption~\ref{ass:inside_hull} is standard, otherwise the evader may easily escape going straight along a direction opposite to the convex hull of the pursuers \cite{Jankovic1978}. On the contrary, if Assumption~\ref{ass:inside_hull} holds, there exist pursuers' strategies which guarantee capture of the evader in finite time \cite{Jankovic1978, kopparty2005}.

For a given configuration of the players at time $t$, let us define as $\cV(t)$ the Voronoi cell associated to the evader, i.e., the region of the plane closer to the evader than to the pursuers, at time $t$. Under Assumptions~\ref{ass:same_speed}-\ref{ass:inside_hull}, $\cV(t)$ turns out to be a triangle; let us denote by $V_i(t),~i=1,2,3,$ its vertices. For a given triangle $\cV$, we denote by $l,m,s$ the longest, medium and smallest edge of $\cV$, respectively. Moreover, we name the vertices of $\cV$ such that $V_1$ is the vertex joining	 the longest and medium edges, while pursuers are labeled such that $P_i$ is the pursuer farthest from $V_i$. It can be easily observed that (see Fig.~\ref{fig:D_strategy})
\begin{equation}\label{eq:P_lontani}
\|V_i-P_i\|>\|V_i-E\|~,~ i=1,2,3\ .
\end{equation}

The pursuit game can be played in continuous time, in which players simultaneously set their velocities, or discrete time, when players move in turn. In the latter case, by convention, the evader moves first, and then all pursuers move simultaneously after they have observed the evader's move. Let $e(t)\in\rr^2$ and $w_i(t)\in\rr^2$, $i=1,2,3,$ be such that $\|e(t)\|=1$ and $\|w_i(t)\|=1$.

In continuous-time games, the motion models are
\begin{equation}\label{eq:ct}
\left\{
\begin{array}{l}
\dot{E}(t)=e(t)\\
\dot{P}_i(t)=w_i(t)~,~i=1,2,3
\end{array}
\right.
\end{equation}
while in discrete-time games they move following
\begin{equation}\label{eq:dt}
\left\{
\begin{array}{l}
E(t+1)=E(t)+e(t)\\
P_i(t+1)=P_i(t)+w_i(t)~,~i=1,2,3
\end{array}
\right.\ .
\end{equation}

In this paper, two classes of pursuers' strategies are considered: \emph{decentralized} (uncoordinated) and \emph{cooperative} (coordinated). A pursuers' strategy is said decentralized if each pursuer does not have information about the other pursuers and computes its move solely on the base of the evader position and move (i.e., $E(t)$ and $e(t)$) and its own state $P_i(t)$. On the contrary, in the cooperative case, each pursuer knows the full game state and chooses its move depending on $P_1(t),P_2(t),P_3(t),E(t)$ and $e(t)$. In both cases, we assume the evader has a complete knowledge of the state of the game.

\section{Decentralized pursuit strategy}\label{sec:decentralized}

In this section, a decentralized pursuers' strategy is recalled. It has been proposed in \cite{pshenichnyi1976} for the continuous-time framework, and in \cite{kopparty2005} (under the name ``Planes'') within the discrete-time setting.
Such a strategy is designed in $\rr^n$ and it guarantees capture in finite time under Assumptions~\mbox{\ref{ass:same_speed}-\ref{ass:inside_hull}}.
In this paper, we will restrict the analysis to the two-dimensional space. 

Let $C_i(t)=(P_i(t)+E(t))/2$ and $z_i(t)=P_i(t)-E(t)$, $i=1,2,3$. Denote by $z_i(t)^\perp$ a vector orthogonal to $z_i(t)$.
Define
$$
B_i(t)=\cL(E(t),e(t))\cap\cL(C_i(t),z_i(t)^\perp)\ .
$$
For both models \eqref{eq:ct} and \eqref{eq:dt}, define the pursuers' moves $w_i(t)$, $i=1,2,3,$ as follows (see Fig.~\ref{fig:D_strategy})
\begin{subnumcases}{w_i(t)=}
~~~~~~~e(t)  & if  $z_i(t)'e(t)\le0$\label{decentralized_strategy_a}\\[1mm]
\frac{B_i(t)-P_i(t)}{\|B_i(t)-P_i(t)\|}   & if  $z_i(t)'e(t)>0$\label{decentralized_strategy_b}
\end{subnumcases}

\begin{figure}[htb]
	\centering %
	\includegraphics[width=.5\columnwidth]{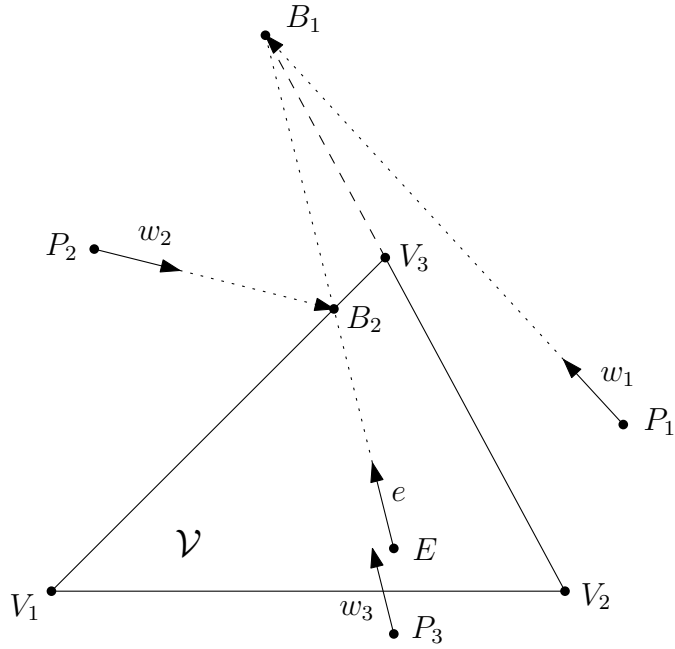}
	\caption{Example of pursuers' moves based on the $\cD$-strategy. Pursuers $P_1$ and $P_2$ follow \eqref{decentralized_strategy_b}, while $P_3$ obeys to \eqref{decentralized_strategy_a}.} \label{fig:D_strategy}
\end{figure}

In words, when the evader moves towards an edge of the Voronoi cell (condition \eqref{decentralized_strategy_b}), the corresponding pursuer makes a specular move which leaves the edge unchanged. Conversely, if the evader moves away from that edge (condition \eqref{decentralized_strategy_a}), the pursuer makes the same move of the evader, thus causing a shrinking of the Voronoi cell (in Fig.~\ref{fig:D_strategy}, this occurs for pursuer $P_3$). It is worth remarking that in the latter case, the direction of the edge of the Voronoi cell does not change. Therefore, all the Voronoi cells throughout the entire game are similar triangles, no matter of the path followed by the evader.

We refer to the decentralized pursuers' strategy in \eqref{decentralized_strategy_a}-\eqref{decentralized_strategy_b} as $\cD$-strategy. Notice that in \eqref{decentralized_strategy_a}-\eqref{decentralized_strategy_b} each pursuer's move depends solely on the evader and its own state. 

The results presented in this paper hold for both the continuous-time and the discrete-time framework. However, in the discrete-time approach, such results do not take into account the quantization effect introduced by the discrete move, see e.g., \cite{sgall01,casini_garulli_LCSS17}. Therefore, for ease of exposition, in the sequel we will refer to the continuous-time setting.

\subsection{Optimal evasion strategy against the $\cD$-strategy}

Hereafter, a pursuers' strategy will be said optimal when it guarantees capture in minimum time, while an evader's strategy is optimal if it guarantees survival of the evader for the longest time.
In this subsection, an optimal evader's strategy is devised for games in which the pursuers play the $\cD$-strategy. Let us name such strategy as $\cE$. It is worthwhile to notice that there exist several evader strategies which lead to the same optimal capture time; we will just focus on one of them.

Let $E(0)$, $P_i(0)$, $i=1,2,3,$ be given, and let $\cV(0)$ be the corresponding Voronoi cell. Without loss of generality let us assume the longest edge of $\cV(0)$ be $l=\|V_1(0)-V_2(0)\|$. Let $v=V_1(0)-V_2(0)$ and denote by $S$ and $Q$ the intersection points between the line passing through $E(0)$ parallel to the longest side of the triangle and $\cV(0)$ (see Fig.~\ref{fig:M_D_steps}), i.e.,
\begin{equation}\label{eq:S}
S=\overline{V_2(0) V_3(0)}\cap\cL(E(0),v)
\end{equation}
and
\begin{equation}\label{eq:Q}
Q=\overline{V_1(0) V_3(0)}\cap\cL(E(0),v)\ .
\end{equation}

Define the unitary vectors connecting the vertices of $\cV(0)$ as
\begin{equation}\label{eq:v_i}
v_{ij}=\frac{V_i(0)-V_j(0)}{\|V_i(0)-V_j(0)\|}\quad,\quad i\neq j\ .
\end{equation}
In a similar way, define
\begin{align}
&v_{SE}=\frac{S-E(0)}{\|S-E(0)\|}~,~v_{QE}=\frac{Q-E(0)}{\|Q-E(0)\|}\nonumber\\
&v_{V_1 Q}=\frac{V_1(0)-Q}{\|V_1(0)-Q\|}\ .\nonumber
\end{align}

Let us now define the evader's strategy $\cE$ as follows, see Fig.~\ref{fig:M_D_steps}.
\begin{enumerate}
	\item From $E(0)$ the evader moves along $v_{QE}$ to $Q$.
	\item Once in $Q$, it moves along $v_{V_1 Q}$ to $V_1(0)$.
	\item Once in $V_1(0)$, it moves towards $V_2(0)$, until it reaches the farthest vertex of the current Voronoi cell, where it is captured.
\end{enumerate}

\begin{remark}
	In order to simplify the exposition, when it is stated that the evader moves to a point which lies on the boundary of $\cV$, we actually mean that it moves to an interior point of $\cV$ which is arbitrarily close to the boundary. 
	In fact, such a move is feasible and safe, due to the fact that the evader can reach any point inside $\cV$ without being captured, by definition of the Voronoi cell. 
	For instance, referring to item 1) of the $\cE$-strategy, the evader will move along $v_{QE}$ to a point $\widetilde Q$ such that $\|\widetilde Q-Q\|<\delta$, for a small $\delta>0$. 
	In this respect, all the results presented in the paper must be intended as limit results, obtained by letting $\delta$ tend to 0.
\end{remark}

\begin{figure}[htb]
	\centering %
	\includegraphics[width=0.8\columnwidth]{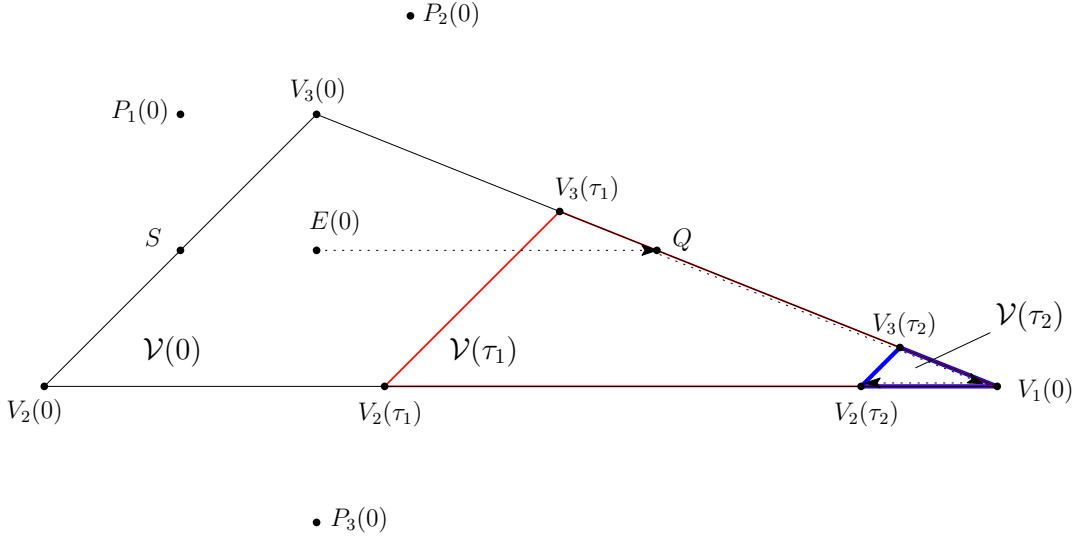}
	\caption{Sketch of the evader's strategy $\cE$. The evader follows the dotted arrows. The Voronoi cells at the beginning of steps 1, 2 and 3 are depicted in black, red and blue, respectively. } \label{fig:M_D_steps}
\end{figure}

In the next theorem, it is proved that the $\cE$-strategy is optimal for the evader when the pursuers play the $\cD$-strategy, and the related game length $M_\cD$ is given. 

\begin{theorem}\label{th:mosse_decentralizzate}
	Let the pursuers play the $\cD$-strategy and the evader play the $\cE$-strategy. Then, the game will last for a time
	\begin{equation}\label{eq:M_D}
	M_\cD=\|S-Q\|+\|Q-V_1(0)\|\ .
	\end{equation}
	Moreover, the $\cE$-strategy is optimal for the evader.
\end{theorem}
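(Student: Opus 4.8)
The plan is to split the argument into two parts: first show that the $\cE$-strategy achieves a game length of exactly $\|S-Q\|+\|Q-V_1(0)\|$ against the $\cD$-strategy, and then show that no evader strategy can do better against the $\cD$-strategy, so that $M_\cD$ is indeed the optimal (maximal) survival time.

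For the first part, I would track the Voronoi cell $\cV(t)$ as the game evolves through the three phases of the $\cE$-strategy. The key structural fact, already noted in the excerpt, is that under the $\cD$-strategy all Voronoi cells are similar triangles: when the evader moves "into" an edge (case \eqref{decentralized_strategy_b}) the corresponding bisector line is unchanged, and when it moves "away" (case \eqref{decentralized_strategy_a}) that edge translates toward the evader, shrinking the cell while preserving its direction. In Phase~1 the evader travels from $E(0)$ to $Q$ along a line parallel to the longest edge $l$; during this motion it moves away from the edge opposite $V_3(0)$ (causing it to translate inward) while the other two edges, whose directions are $v_{12}$, are... one has to check precisely which pursuers are in case \eqref{decentralized_strategy_a} versus \eqref{decentralized_strategy_b}, but the net effect should be that after Phase~1 the vertex $V_1$ has moved to $Q$ (or, more precisely by the Remark, arbitrarily close to it), so this phase takes time $\|E(0)-Q\|$. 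Similarly, Phase~2 moves the evader from $Q$ to $V_1(0)$ along $v_{V_1Q}$, and one shows the relevant vertex of the shrinking cell arrives at $V_1(0)$, taking time $\|Q-V_1(0)\|$. In Phase~3 the evader runs straight from $V_1(0)$ toward $V_2(0)$; at this point the Voronoi cell has degenerated so that $E$ coincides with one vertex and the farthest vertex is at distance equal to the remaining length, which by similarity and the geometry works out to $\|S-Q\|$; during this final dash no pursuer can do better than the head-on chase and the evader is caught after time $\|S-Q\|$. Summing gives \eqref{eq:M_D} — but note $\|E(0)-Q\|$ must be reconciled with the claimed $\|S-Q\|$ term, so the bookkeeping must actually show Phases~1 and~3 together contribute $\|S-Q\|+\|E(0)-Q\|$... this is precisely where care is needed, and I expect the clean statement follows because $\|S-E(0)\|+\|E(0)-Q\|=\|S-Q\|$ (the three points are collinear with $E(0)$ between $S$ and $Q$), with Phase~1 contributing $\|E(0)-Q\|$ and Phase~3 contributing $\|S-Q\|$ only after accounting for the shrinkage, or vice versa. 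Getting this accounting exactly right is the main obstacle in the achievability direction.

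For the optimality (upper bound) part, I would argue that against the $\cD$-strategy the evader's "room to maneuver" is captured by a scalar potential — naturally, (a suitable multiple of) the longest edge of the current Voronoi cell, or equivalently the distance from $E$ to the farthest vertex $V_1$. Under the $\cD$-strategy this potential is non-increasing, it strictly decreases at unit rate exactly when the evader moves in a direction with a positive component "away" from some active edge (case \eqref{decentralized_strategy_a}), and the game ends when it hits zero. Hence the evader's best play is to decrease this potential as slowly as possible; because the cell is a triangle, at every instant at least one edge is being approached by the evader's motion unless the evader moves parallel to all the relevant edges, which is impossible for more than an instant. A careful case analysis of the triangle geometry — partitioning the plane around $E$ by the three edge directions and the three perpendiculars — should show that the slowest admissible decrease rate corresponds exactly to the three-phase trajectory of $\cE$: move parallel to the longest edge as long as possible (Phase~1), then along the next best direction (Phase~2), then finally straight along the longest edge when nothing else is left (Phase~3). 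Formalizing "the $\cE$-strategy is the unique (up to equivalent choices) slowest descent path of the potential" is the crux; I would likely do it by a local/greedy exchange argument — showing any deviation from $\cE$ at some time can be locally modified to decrease the potential no faster — combined with the boundary constraint that the evader must remain inside $\cV(t)$ (the Remark's limiting interpretation). The main obstacle overall is handling the switching structure of the $\cD$-strategy, i.e., keeping track of which of the three pursuers are in regime \eqref{decentralized_strategy_a} versus \eqref{decentralized_strategy_b} throughout each phase, since the potential's rate of decrease depends on exactly this combinatorial state.
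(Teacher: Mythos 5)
Your achievability half follows essentially the paper's route: track $\cV(t)$ through the three legs, using the fact that under the $\cD$-strategy the cell remains similar to $\cV(0)$ and that during legs 1 and 2 only the smallest edge translates. Two corrections to your bookkeeping, though. First, after leg 1 it is the \emph{evader} (not the vertex $V_1$) that reaches $Q$; $V_1$ is precisely the vertex that does not move, and this is what makes leg 2 (travelling to $V_1(0)$) feasible. Second, the reconciliation you left open works out as follows: leg 1 costs $\|E(0)-Q\|$, leg 2 costs $\|Q-V_1(0)\|$, and leg 3 costs $\|V_1(0)-V_2(\tau_2)\|$, which by similarity of the cells equals $\|S-E(0)\|$; since $S$, $E(0)$, $Q$ are collinear with $E(0)$ between them, the total is $\|S-Q\|+\|Q-V_1(0)\|$, i.e., \eqref{eq:M_D}. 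These are fixable details.

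The genuine gap is in the optimality half. The potential you propose --- the longest edge $l$ of the current cell, ``or equivalently'' the distance from $E$ to the farthest vertex (the two are not equivalent) --- is not the value of the game against $\cD$: by \eqref{eq:MD_le_l} one only has $M_\cD\le l$, in general strictly, and $M_\cD$ depends on where $E$ sits inside the cell while $l$ does not (for an equilateral cell $M_\cD=l$ for every interior position of $E$, whereas the distance to the farthest vertex at the center is $l/\sqrt{3}$), so no fixed multiple of either quantity can serve as the value function. Consequently, even if you proved that your potential decays at rate at least one for every evader heading, you would only bound survival by $l(0)$, which exceeds $M_\cD$ and establishes nothing about the optimality of $\cE$; moreover, your qualitative claim that the potential ``decreases at unit rate exactly when the evader moves away from some active edge'' cannot hold for the correct potential, since unit rate is attained only along three isolated headings and every other heading is strictly worse. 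The paper's actual argument takes as potential the residual game length $M_\cD(t)$ itself, i.e., the quantity \eqref{eq:M_D} recomputed at the current state, and computes its directional derivative $dM_\cD(t)/dt$ in closed form via the law of sines, sector by sector over the six angular intervals cut out by $\pm v_{12},\pm v_{13},\pm v_{23}$ (Table~\ref{tab:casistica}); the maximum of this derivative over headings is $-1$, attained exactly at $v_{QE}$, $v_{SE}$, $v_{V_1Q}$, which simultaneously yields the upper bound on survival and the optimality of $\cE$. Your ``local/greedy exchange'' sketch would still require exactly this sector-by-sector computation (identifying which pursuers are in regime \eqref{decentralized_strategy_a} versus \eqref{decentralized_strategy_b} and quantifying the resulting rate), so as written the optimality claim remains unproven.
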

\begin{bf}
	See appendix.
\end{bf}

\begin{remark}
	In \cite{pshenichnyi1976}, an upper bound on the capture time is reported for the generic game played in $\rr^n$ involving $m$ pursuers. Let $z_i=P_i-E$, by fixing $n=2$ and $m=3$, such bound turns out to be
	$$
	B_P=\ds\frac{\ds\max_{i=1,2,3}\|z_i\|}{\delta_0}
	$$
	with
	$$
	\delta_0=\min_{\|p\|=1} \max_{i=1,2,3} \frac{p'z_i}{\|z_i\|}\ .
	$$
	It can be shown that $B_P$ is in general much larger than the exact number of moves given by Theorem~\ref{th:mosse_decentralizzate}. For instance, for $10^6$ randomly generated games, the ratio between $B_P$ and $M_\cD$ turned out ranging from about 1.2 to over 3000.
\end{remark}

\section{Lower bound on capture time}\label{sec:lower_bound}
Assume all players have complete information about the state of the game. The following theorem gives a lower bound $\underline B$ on the capture time, i.e., the evader, playing a suitable strategy, may avoid capture for at least a time $\underline B$, for any possible pursuers' strategy.

\begin{theorem}\label{th:LB_general}
	Let
	$$
	i^*=\arg\max_{i=1,2,3} \frac{1}{2}\left(\|V_i(0)-P_i(0)\|+\|V_i(0)-E(0)\|\right)
	$$
	and define $V^*(t)=V_{i^*}(t)$ and $P^*(t)=P_{i^*}(t)$, for all $t$.
	
	Then, the evader is able to survive for at least a time $\underline B$, where
	\begin{align}
	\underline B&=\frac{1}{2}\left(\|V^*(0)-P^*(0)\|+\|V^*(0)-E(0)\|\right)\label{eq:LB_general3}\\
	&=\max_{i=1,2,3} \frac{1}{2}\left(\|V_i(0)-P_i(0)\|+\|V_i(0)-E(0)\|\right)\label{eq:LB_general2}\\
	&=\max_{i=1,2,3} (\|V_i(0)-E(0)\|\nonumber\\
	&~~~~~~+\frac{1}{2}\left(\|V_i(0)-P_i(0)\|-\|V_i(0)-E(0)\|\right))\ .\label{eq:LB_general1}
	\end{align}	
	~\vspace*{-10pt}
\end{theorem}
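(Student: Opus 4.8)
The three displayed formulas for $\underline B$ are identical up to elementary rearrangement, so it suffices to exhibit an evasion strategy guaranteeing survival for a time $\frac12(\|V^*(0)-P^*(0)\|+\|V^*(0)-E(0)\|)$. Write $\rho=\|V^*(0)-E(0)\|$ and $g=\|V^*(0)-P^*(0)\|-\rho$; by \eqref{eq:P_lontani} applied to $i=i^*$ one has $g>0$, and $\underline B=\rho+g/2$. A fact I would establish first: $V^*(0)$ lies on the perpendicular bisectors of $\overline{E(0)P_a(0)}$ and $\overline{E(0)P_b(0)}$, where $\{a,b\}=\{1,2,3\}\setminus\{i^*\}$. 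Indeed each vertex $V_i$ lies on exactly the two bisectors meeting there, hence is equidistant from $E(0)$ and from the corresponding two pursuers, and the third pursuer — the one strictly farther from $V_i$ by \eqref{eq:P_lontani} — is, by the labeling convention, $P_i$; so $P^*$ is the pursuer \emph{not} on the bisectors through $V^*(0)$, and consequently $\|V^*(0)-P_a(0)\|=\|V^*(0)-P_b(0)\|=\rho$.

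The strategy I would analyze has two legs. First the evader runs straight from $E(0)$ toward $V^*(0)$, staying (by the usual limiting convention for moves to boundary points of $\cV$) strictly inside its Voronoi cell; on this leg its position at time $t$ is at distance $t$ from $E(0)$ and strictly closer to $E(0)$ than to every pursuer, so no pursuer — being within distance $t$ of its own start — can occupy it, and the evader is safe for a time arbitrarily close to $\rho$. Having reached the vicinity of $V^*(0)$, the evader then heads toward $P^*(0)$, i.e.\ back across its cell in the direction of the far pursuer. On this second leg $P^*$ is the natural threat: it starts at distance $\rho+g$ from $V^*(0)$, hence at distance $\ge g$ from the evader at the start of the leg, and since evader and $P^*$ close at relative speed at most $2$, capture by $P^*$ cannot occur before a further $g/2$ units elapse — giving total time $\rho+g/2=\underline B$. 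A direct computation pins the critical point to $M^*=V^*(0)+\tfrac{g}{2}\,\tfrac{P^*(0)-V^*(0)}{\|P^*(0)-V^*(0)\|}$, which is precisely at distance $\underline B$ from $P^*(0)$, so the bound is met with equality on this leg.

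The hard part — and the step I expect to be the main obstacle — is ruling out an earlier capture by the \emph{near} pursuers $P_a,P_b$. Since each of them sits at distance exactly $\rho$ from $V^*(0)$ (just as $E(0)$ does), either one can "camp" at $V^*(0)$ by time $\rho$; consequently a trajectory \emph{committed} to passing through $V^*(0)$ would actually be captured there at time $\rho<\underline B$, so the evader must play reactively, steering slightly so as never to concede the corner $V^*(0)$ to a pursuer heading for it, while showing this costs nothing in the limiting construction. I would organize the whole argument as an invariant/potential argument: produce a function $\Phi$ of the running configuration with $\Phi=\underline B$ at the initial configuration, with $\Phi$ a lower bound on the evader's remaining guaranteed survival time, and such that for any pursuers' instantaneous velocities the evader has a velocity with $\dot\Phi\ge-1$; the natural candidate is $\Phi=\max_{i}\tfrac12(\|V_i-P_i\|+\|V_i-E\|)$ evaluated along the play. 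Verifying the $\dot\Phi\ge-1$ property — which requires tracking how the Voronoi vertices $V_i$ move as all four agents move and exhibiting the evader's matching response, the near-pursuer interaction being exactly what makes this nontrivial — and checking the behaviour of $\Phi$ as capture is approached, is where the real work lies.
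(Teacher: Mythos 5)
Your first leg coincides with the paper's, but your proposal stops being a proof exactly where you say it does: nothing in it rules out capture by the near pursuers during the second leg, and the potential-function program you outline (showing the evader can always enforce $\dot\Phi\ge-1$ for $\Phi=\max_i\tfrac12(\|V_i-P_i\|+\|V_i-E\|)$) is only announced, not carried out. As written, your second leg is aimed at the \emph{initial} datum $P^*(0)$; a near pursuer, which starts at distance exactly $\|V^*(0)-E(0)\|$ from $V^*(0)$, can spend the first leg repositioning itself near the ray from $V^*(0)$ toward $P^*(0)$ and threaten an interception well before the additional $g/2$ has elapsed, and your closing-speed argument only controls $P^*$. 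So the gap is genuine, and it is the step that carries the theorem.

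The paper closes it without any invariant or reactive steering. At time $\tau_1=\|V^*(0)-E(0)\|$ the evader re-evaluates the configuration and dashes to $Z=\bigl(E(\tau_1)+P^*(\tau_1)\bigr)/2$, the midpoint between its \emph{current} position and the far pursuer's \emph{current} position. The point of this choice is that the paper places $Z$ on the boundary of the current Voronoi cell $\cV(\tau_1)$, so the whole segment from $E(\tau_1)$ to $Z$ lies in the evader's current cell and is therefore safe from all three pursuers at once --- this is the same ``any point of your current cell is safely reachable'' fact you already use on the first leg, and it is precisely what disposes of $P_a$ and $P_b$. The length of this leg is $\tfrac12\|P^*(\tau_1)-E(\tau_1)\|\ge\tfrac12\bigl(\|V^*(0)-P^*(0)\|-\tau_1\bigr)=g/2$, since $P^*$ can have closed at most $\tau_1$ during the first leg, giving $\underline B=\tau_1+g/2$ after maximizing over the vertex index. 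In short, the missing idea is to aim the second leg at a point defined by the time-$\tau_1$ configuration (the midpoint toward $P^*(\tau_1)$) rather than at $P^*(0)$; with that single re-targeting the near-pursuer interaction you flag as ``the real work'' never arises, and no verification of $\dot\Phi\ge-1$ is needed.
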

\begin{proof}
	Let us consider the following evader's strategy. From its initial position $E(0)$, it moves straight to $V_i(0)$ for a time $\tau_1=\|E(0)-V_i(0)\|$ for a given $i\in\{1,2,3\}$. Since $V_i(0)\in\cV(0)$, it can be arbitrarily approached, irrespectively of the pursuers' strategy.
	
	Let $d_0=\|P_i(0)-V_i(0)\|$. By \eqref{eq:P_lontani}, one has 
	\begin{equation}\label{eq:dist_P}
	d_0=\|P_i(0)-V_i(0)\|> \|E(0)-V_i(0)\|=\tau_1.
	\end{equation}
	Since the speed of the pursuers is set to 1, by \eqref{eq:dist_P} the distance between the evader and the pursuer $P_i$ at time $\tau_1$ is such that
	\begin{align}
	\|P_i(\tau_1)-E(\tau_1)\|&=\|P_i(\tau_1)-V_i(0)\|\nonumber\\
	&\ge \|P_i(0)-V_i(0)\|-\tau_1=d_0-\tau_1\!>\!0.\nonumber
	\end{align}
	Define $d_{\tau_1}=d_0-\tau_1>0.$
	Let $Z=(E(\tau_1)+P_i(\tau_1))/2$ be the midpoint between $E(\tau_1)$ and $P_i(\tau_1)$. By the definition of Voronoi cell, $Z$ lies on the boundary of $\cV(\tau_1)$. Assuming the evader goes straight to $Z$, it covers a distance
	$$
	\|Z-E(\tau_1)\|=\frac{\|P_i(\tau_1)-E(\tau_1)\|}{2}\ge \frac{d_{\tau_1}}{2}
	$$ 
	and then it is captured in $Z$. Hence, the time needed to cover the entire path turns out to be
	\begin{align}
	T&\ge \tau_1+\frac{d_{\tau_1}}{2}=\tau_1+\frac{d_0-\tau_1}{2}=\frac{1}{2}(\tau_1+d_0)\nonumber\\
	&=\frac{1}{2} (\|V_i(0)-E(0)\|+\|V_i(0)-P_i(0)\|)\ .\label{eq:c1}
	\end{align}
	Therefore, the right hand side of \eqref{eq:c1} is a lower bound to the evader's survival time. By taking the maximum with respect to $i=1,2,3$, one gets the lower bound $\underline{B}$ in (10). The expressions (11) and (12) follow from straightforward manipulations. 
\end{proof}

\section{Cooperative vs decentralized pursuit strategies}\label{sec:strategy_comparison}

The aim of this section is to analyze the potential advantage of the pursuers to cooperate in the pursuit task, with respect to adopting the $\cD$-strategy discussed in Section III.
The following lemmas are instrumental to prove the main results. Hereafter, the time dependence is omitted when it is clear from the context. 

\begin{lemma}
	Let the pursuers play the $\cD$-strategy and let $l$ denote the longest edge of $\cV$. Then
	\begin{equation}\label{eq:MD_le_l}
	M_\cD\le l\ .
	\end{equation}
\end{lemma}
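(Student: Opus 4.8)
The plan is to convert the expression for $M_\cD$ provided by Theorem~\ref{th:mosse_decentralizzate} into a convex combination of two edge lengths of $\cV(0)$, and then to bound that combination by the longest edge. Recall that, with the labeling adopted in Section~\ref{sec:PEG} and the convention fixed before Theorem~\ref{th:mosse_decentralizzate}, $l=\|V_1(0)-V_2(0)\|$ is the longest edge of $\cV(0)$ and $m=\|V_1(0)-V_3(0)\|$ is the medium edge, so in particular $m\le l$.

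The key geometric fact I would use is that, by \eqref{eq:S}--\eqref{eq:Q}, the segment $\overline{SQ}$ is parallel to $\overline{V_1(0)V_2(0)}$, with $S\in\overline{V_2(0)V_3(0)}$ and $Q\in\overline{V_1(0)V_3(0)}$; moreover, since $E(0)$ lies strictly inside $\cV(0)$ by Assumption~\ref{ass:inside_hull}, the point $Q$ is interior to the side $\overline{V_1(0)V_3(0)}$. By Thales' theorem, the triangle with vertices $V_3(0),S,Q$ is similar to the triangle with vertices $V_3(0),V_2(0),V_1(0)$, so there exists $\lambda\in(0,1)$ with
\[
\|S-Q\|=\lambda\,\|V_1(0)-V_2(0)\|=\lambda\,l\,,\qquad \|V_3(0)-Q\|=\lambda\,\|V_3(0)-V_1(0)\|=\lambda\,m\,.
\]

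Then I would simply write $\|Q-V_1(0)\|=\|V_1(0)-V_3(0)\|-\|V_3(0)-Q\|=(1-\lambda)\,m$, where the fact that $Q$ is interior to $\overline{V_1(0)V_3(0)}$ is what makes this subtraction legitimate, and conclude, via \eqref{eq:M_D},
\[
M_\cD=\|S-Q\|+\|Q-V_1(0)\|=\lambda\,l+(1-\lambda)\,m\le\lambda\,l+(1-\lambda)\,l=l\,,
\]
the last inequality being $m\le l$. This establishes \eqref{eq:MD_le_l}.

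There is no genuine obstacle in this argument; the only points deserving a line of care are the identification of the common similarity ratio $\lambda$ (equivalently, that a line parallel to one side of a triangle cuts the other two sides proportionally) and the verification that $Q$ lies strictly between $V_1(0)$ and $V_3(0)$, both of which follow at once from $E(0)$ being strictly inside the Voronoi triangle.
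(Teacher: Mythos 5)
Your proof is correct and follows essentially the same route as the paper: both arguments rest on the similarity (Thales) between the triangle $V_3\,Q\,S$ and the triangle $V_3\,V_1\,V_2$ cut off by the line through $E(0)$ parallel to the longest side, combined with the inequality $m\le l$. Writing $M_\cD=\lambda l+(1-\lambda)m$ as an explicit convex combination is just a cleaner packaging of the paper's ratio manipulation, which derives the equivalent bound $\|Q-V_1(0)\|\le l-\|Q-S\|$.
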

\begin{proof}
	Let $S$ and $Q$ be defined as in \eqref{eq:S}-\eqref{eq:Q} and assume $l=\|V_1-V_2\|$, see Fig.~\ref{fig:M_D}. It holds
	\begin{align}
	\|V_1&-V_3\| / \|V_1-V_2\|=\|Q-V_3\| / \|Q-S\|\nonumber\\
	&=(\|V_1-V_3\|-\|Q-V_3\|) / (\|V_1-V_2\|-\|Q-S\|).\nonumber
	\end{align}
	Since $\|V_1-V_3\|\le\|V_1-V_2\|$ one has
	$$
	\|V_1-V_3\|-\|Q-V_3\|\le\|V_1-V_2\|-\|Q-S\|
	$$
	or equivalently
	$$
	\|Q-V_1\|\le \|V_1-V_2\|-\|Q-S\|\ .
	$$
	So, by Theorem~\ref{th:mosse_decentralizzate},
	\begin{align}\nonumber
	M_\cD&=\|Q-S\|+\|Q-V_1\|\\
	&\le \|Q-S\|+\|V_1-V_2\|-\|Q-S\|=l\ .\nonumber
	\end{align}
\end{proof}
\begin{figure}[htb]
	\centering %
	\includegraphics[width=0.8\columnwidth]{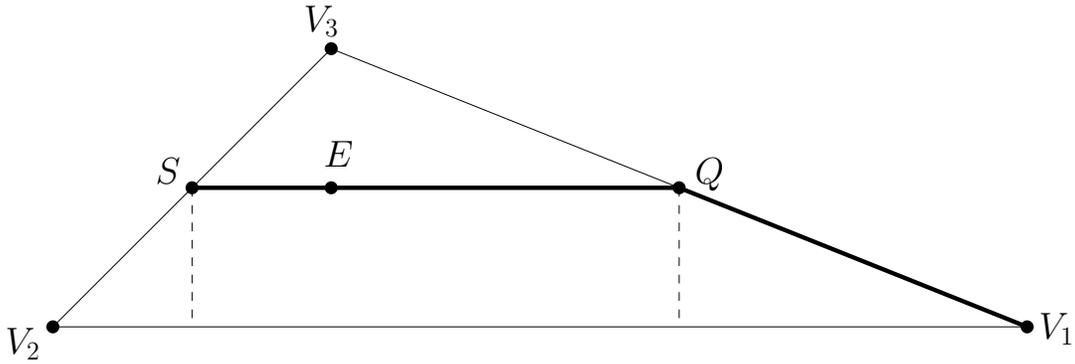}
	\caption{Game length when pursuer is playing the $\cD$-strategy. $M_\cD$ is the length of the bold line.} \label{fig:M_D}
\end{figure}

\begin{lemma}
	Let $\underline B$ be given as in Theorem~\ref{th:LB_general} and let $l$ denote the largest edge of $\cV$. Then,
	\begin{equation}\label{eq:B_ge_l_mezzi}
	\underline B\ge l/2\ .
	\end{equation}
\end{lemma}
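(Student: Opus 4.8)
\textbf{Proof strategy for Lemma (\ref{eq:B_ge_l_mezzi}).}
The plan is to lower-bound $\underline B$ by choosing a convenient index in the max defining it. By Theorem~\ref{th:LB_general},
\[
\underline B=\max_{i=1,2,3}\frac{1}{2}\left(\|V_i(0)-P_i(0)\|+\|V_i(0)-E(0)\|\right)\ge \frac{1}{2}\left(\|V_k(0)-P_k(0)\|+\|V_k(0)-E(0)\|\right)
\]
for any fixed $k\in\{1,2,3\}$. So it suffices to exhibit one vertex $V_k$ for which $\|V_k(0)-P_k(0)\|+\|V_k(0)-E(0)\|\ge l$. The natural candidate is the vertex opposite the longest edge $l$: if $l=\|V_1-V_2\|$, then the remaining vertex is $V_3$, and we take $k=3$.

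First I would use the definition of the Voronoi cell. Since $E$ lies on the perpendicular bisector segments that form the cell boundary, the evader is equidistant from each pursuer and the corresponding two vertices; more directly, for each edge of $\cV$ the two endpoints are equidistant from $E$ and from the pursuer that is ``reflected'' across that edge. In particular $\|V_3-E\|$ equals half the distance from $E$ to the reflection, but the key elementary fact I need is simply that $V_3$, being a vertex of the cell on the boundary, satisfies $\|V_3-E\|=\|V_3-P_3\|$ is false in general — instead $P_3$ is the pursuer \emph{farthest} from $V_3$, so by \eqref{eq:P_lontani} we have $\|V_3-P_3\|>\|V_3-E\|$, giving
\[
\frac{1}{2}\left(\|V_3-P_3\|+\|V_3-E\|\right)\ge \|V_3-E\|\ ,
\]
which is not yet enough. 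So the real content is to show $\|V_3-E\|\ge l/2$, or more precisely that the average $\tfrac12(\|V_3-P_3\|+\|V_3-E\|)$ reaches $l/2$. The cleanest route: observe that $V_3$ is a vertex where the two edges meeting it are $m$ and $s$ (the medium and smallest edges, since $l=\overline{V_1V_2}$ is opposite $V_3$); a triangle inequality argument on $\cV$ itself — or on the fact that $V_1,V_2$ are each equidistant from $E$ — yields $\|V_1-E\|=\|V_2-E\|$ and then $l=\|V_1-V_2\|\le\|V_1-E\|+\|E-V_2\|=2\|V_1-E\|$, so $\|V_1-E\|\ge l/2$. Hence choosing $k=1$ rather than $k=3$:
\[
\underline B\ge \frac{1}{2}\left(\|V_1-P_1\|+\|V_1-E\|\right)\ge \frac{1}{2}\left(\|V_1-E\|+\|V_1-E\|\right)=\|V_1-E\|\ge \frac{l}{2}\ ,
\]
where the middle inequality is \eqref{eq:P_lontani}.

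I would carry out the steps in this order: (i) recall that all three vertices are equidistant from $E$ by the Voronoi-cell definition, so each $\|V_i-E\|$ equals the common circumradius-like quantity $\rho$; (ii) apply the triangle inequality to the longest edge, $l=\|V_1-V_2\|\le\|V_1-E\|+\|V_2-E\|=2\rho$, giving $\rho\ge l/2$; (iii) invoke \eqref{eq:P_lontani} to get $\|V_1-P_1\|>\|V_1-E\|=\rho$; (iv) plug into the expression \eqref{eq:LB_general2} for $\underline B$ with $i=1$ to conclude $\underline B\ge\tfrac12(2\rho)=\rho\ge l/2$. The main obstacle is step (i): justifying that $E(0)$ is equidistant from all three vertices of $\cV(0)$. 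This is exactly the statement that the Voronoi vertices are the points equidistant from $E$ and two pursuers, combined with the fact that consecutive edges share the property of $E$-equidistance of their endpoints — so all vertices lie on a common circle centered at... no, rather each vertex $V_i$ satisfies $\|V_i-E\|=\|V_i-P_j\|$ for the two pursuers $P_j$ bounding the edges at $V_i$; since the three pairwise bisectors all pass through $E$'s cell, chasing these equalities around the triangle forces $\|V_1-E\|=\|V_2-E\|=\|V_3-E\|$. Making this geometric bookkeeping precise — or alternatively deriving $\|V_1-E\|\ge l/2$ directly from \eqref{eq:P_lontani} and a triangle inequality without the equidistance claim — is where the care is needed; everything after that is immediate from Theorem~\ref{th:LB_general}.
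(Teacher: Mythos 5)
Your reduction of the lemma to the statement ``some vertex $V_k$ satisfies $\tfrac12(\|V_k-P_k\|+\|V_k-E\|)\ge l/2$'' is sound, and your use of \eqref{eq:P_lontani} to drop to $\underline B\ge\|V_k-E\|$ matches the paper. The gap is the claim you lean on to finish: that $E(0)$ is equidistant from the three vertices of its Voronoi cell. This is false in general. Each vertex $V_i$ is equidistant from $E$ and the \emph{two pursuers} whose bisectors meet at $V_i$, but that common distance varies from vertex to vertex; ``chasing the equalities around the triangle'' only relates $\|V_i-E\|$ to $\|V_i-P_j\|$ at the \emph{same} vertex, never $\|V_1-E\|$ to $\|V_2-E\|$. (The point equidistant from the three vertices is the circumcenter of $\cV$, which generally differs from $E$; e.g.\ with $E=(0,0)$, $P_1=(10,0)$, $P_2=(0,4)$, $P_3=(-2,-2)$ the cell has vertices $(5,2)$, $(5,-7)$, $(-4,2)$ at distances $\approx 5.39$, $8.60$, $4.47$ from $E$.) Consequently your fixed choice $k=1$ can fail: in that example the longest edge has length $9\sqrt2\approx12.73$, yet one of its endpoints is only $\approx4.47<l/2$ from $E$, so $\|V_1-E\|\ge l/2$ is simply not true for an arbitrarily fixed endpoint, and \eqref{eq:P_lontani} cannot rescue it.

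The repair is small and is exactly what the paper does: do not fix the index in advance, but take the better of the two endpoints of the longest edge. Since $\underline B\ge\max_{i}\|V_i-E\|$ (from \eqref{eq:LB_general1} together with \eqref{eq:P_lontani}, as you already argued), the triangle inequality applied through $E$ gives
\begin{equation*}
2\max\{\|E-V_1\|,\|E-V_2\|\}\ \ge\ \|E-V_1\|+\|E-V_2\|\ \ge\ \|V_1-V_2\|=l\ ,
\end{equation*}
so $\underline B\ge\max\{\|E-V_1\|,\|E-V_2\|\}\ge l/2$ without any equidistance assumption. So your argument has all the right ingredients but, as written, rests on a false geometric lemma; replacing the fixed vertex by the maximum over the two endpoints of $l$ closes the gap.
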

\begin{proof}
	Let $l=\|V_1-V_2\|$. By the triangle inequality,
	\begin{align}
	2\max\{\|E-V_1\|,\|E-V_2\|\}&\ge \|E-V_1\|+\|E-V_2\|\nonumber\\
	&\ge \|V_1-V_2\|=l\ .\nonumber
	\end{align}
	By \eqref{eq:P_lontani} and \eqref{eq:LB_general1}, one has
	$$
	\underline B\ge \max_{i=1,\ldots,3} \|V_i-E\|\ge \max\{\|E-V_1\|,\|E-V_2\|\}\ge l/2\ .
	$$
\end{proof}

For a given pursuit-evasion game, let us denote by $\cC$ the optimal cooperative pursuers' strategy and by $M_\cC$ the related maximum capture time, i.e., the time at which capture occurs if the evader plays at its best.
The following theorem reports the relation between $M_\cC$ and the decentralized game length $M_\cD$ provided by Theorem~\ref{th:mosse_decentralizzate}.

\begin{theorem}\label{th:main}
	Let $M_\cD$ be given by \eqref{eq:M_D} and $M_\cC$ be the optimal game length in a cooperative pursuers' setting. Then,
	\begin{equation}\label{eq:main_result}
	M_\cC\le M_\cD\le 2M_\cC\ .
	\end{equation}
\end{theorem}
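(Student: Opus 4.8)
The inequality $M_\cC \le M_\cD$ should be almost immediate: the $\cD$-strategy is a particular admissible pursuers' strategy (each cooperative pursuer can ignore the extra state information it has access to and simply execute \eqref{decentralized_strategy_a}--\eqref{decentralized_strategy_b}), and against it the evader survives at most $M_\cD$ by the optimality half of Theorem~\ref{th:mosse_decentralizzate}. Since $M_\cC$ is the capture time under the \emph{optimal} cooperative strategy against a best-playing evader, it cannot exceed what is already achievable by this specific choice. I would spell this out in one or two sentences, being careful about the direction of the optimality: $M_\cC$ is a min over pursuer strategies of a max over evader strategies, and plugging in the $\cD$-strategy gives exactly $M_\cD$.

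The substantive inequality is $M_\cD \le 2M_\cC$. The natural route is to chain the three results already assembled in this section. First, $M_\cC \ge \underline B$: the lower bound of Theorem~\ref{th:LB_general} holds against \emph{any} pursuers' strategy, in particular against the optimal cooperative one, so the optimal cooperative capture time is at least $\underline B$. Next, $\underline B \ge l/2$ by Lemma~2 (the second lemma), where $l$ is the longest edge of the initial Voronoi cell $\cV(0)$. Finally, $M_\cD \le l$ by Lemma~1 (the first lemma). Putting these together,
\[
M_\cD \;\le\; l \;=\; 2\cdot\frac{l}{2} \;\le\; 2\,\underline B \;\le\; 2\,M_\cC,
\]
which is exactly the right-hand inequality. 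The whole argument is then just a four-line concatenation of previously established facts.

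The one point that deserves care — and which I'd expect to be the only real obstacle — is making sure all four quantities ($M_\cD$, $l$, $\underline B$, $M_\cC$) are evaluated at a consistent reference configuration, namely the initial one at $t=0$. Lemma~1 and Lemma~2 are stated for a generic Voronoi cell $\cV$ with longest edge $l$, so I must instantiate them at $\cV(0)$; Theorem~\ref{th:mosse_decentralizzate} already gives $M_\cD$ in terms of $S$, $Q$, $V_1(0)$ at time $0$; and Theorem~\ref{th:LB_general} defines $\underline B$ from the $t=0$ data. The application of Theorem~\ref{th:LB_general} to the cooperative game also implicitly uses that Assumptions~\ref{ass:same_speed}--\ref{ass:inside_hull} hold, so that $\cV(0)$ is a genuine (nondegenerate) triangle and \eqref{eq:P_lontani} is available — these are standing assumptions, so no extra work is needed, but I'd mention it. No heavy computation is involved; the proof is essentially a bookkeeping argument that correctly threads Lemmas 1--2 and Theorems~\ref{th:mosse_decentralizzate} and~\ref{th:LB_general} together.
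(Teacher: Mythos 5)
Your argument is correct and follows essentially the same route as the paper: $M_\cC\le M_\cD$ because the $\cD$-strategy is one admissible pursuers' strategy, and $M_\cD\le l\le 2\underline B\le 2M_\cC$ by chaining Lemma~1, Lemma~2 and the fact that $\underline B$ from Theorem~\ref{th:LB_general} lower-bounds the capture time for any pursuers' strategy, including the optimal cooperative one. The extra bookkeeping about evaluating everything at $t=0$ is a fine precaution but not a departure from the paper's proof.
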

\begin{proof}
	Since the $\cC$-strategy is optimal among all pursuit strategies, it cannot be worse than the $\cD$-strategy, and hence $M_\cC\le M_\cD$ trivially holds.
	By \eqref{eq:MD_le_l} and \eqref{eq:B_ge_l_mezzi}, one has
	$$
	M_\cD\le l\le 2\underline B\le 2 M_\cC
	$$
	where the last inequality comes from the fact that $\underline B$ is a lower bound on the game length for any pursuers' strategy.
\end{proof}
By Theorem~\ref{th:main}, one immediately has
\begin{equation}\label{eq:disuguaglianze}
\underline B\le M_\cC\le M_\cD\le 2\underline B\le 2 M_\cC\ .
\end{equation}

In the sequel, we prove that there exist games in which
\begin{equation}\label{eq:C_uguale_D}
M_\cC=M_\cD
\end{equation}
and other in which
\begin{equation}\label{eq:C_uguale_2D}
M_\cC=\frac{1}{2}M_\cD
\end{equation}
this meaning that both bounds in \eqref{eq:main_result} are tight.
\begin{theorem}\label{th:tight_sx}
	There exist games such that $M_\cC=M_\cD$.
\end{theorem}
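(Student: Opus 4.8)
The plan is to squeeze $M_\cC$ between $\underline B$ and $M_\cD$. By \eqref{eq:disuguaglianze} we always have $\underline B\le M_\cC\le M_\cD$; by \eqref{eq:MD_le_l} we have $M_\cD\le l$, where $l$ is the longest edge of $\cV(0)$; and, as in the derivation of \eqref{eq:B_ge_l_mezzi}, \eqref{eq:LB_general1} together with \eqref{eq:P_lontani} gives $\underline B\ge\max_{i=1,2,3}\|V_i(0)-E(0)\|$. Hence it is enough to exhibit a game in which $\max_i\|V_i(0)-E(0)\|=l$: for such a game $l\le\underline B\le M_\cC\le M_\cD\le l$, and therefore $M_\cC=M_\cD=l$. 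Since $\max_i\|V_i(0)-E(0)\|$ attains the value $l$ only when the evader sits at the endpoint of the longest edge of $\cV(0)$, the games we are after are those in which the initial evader position tends to that vertex.

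Concretely, fix any nondegenerate triangle $T$ and label its vertices $V_1,V_2,V_3$ as in Section~\ref{sec:PEG}, so that $\overline{V_1V_2}$ is the longest edge and $\|V_1-V_2\|=l$. Pick $E(0)$ in the interior of $T$ and let $P_i(0)$ be the reflection of $E(0)$ across the edge of $T$ opposite to $V_i$. Then that edge is precisely the perpendicular bisector of $\overline{E(0)P_i(0)}$, so the Voronoi cell of $E(0)$ relative to $\{P_1(0),P_2(0),P_3(0)\}$ is exactly $T$; thus $\cV(0)=T$ and $V_i(0)=V_i$. Assumption~\ref{ass:same_speed} holds by construction, while Assumption~\ref{ass:inside_hull} (the evader strictly inside the pursuers' convex hull) is verified directly for the configurations considered below. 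Letting $E(0)\to V_1$ through the interior of $T$, one has $\|V_2(0)-E(0)\|\to\|V_2-V_1\|=l$, so $\max_i\|V_i(0)-E(0)\|\to l$, and the squeeze above forces $\underline B$, $M_\cC$ and $M_\cD$ all to tend to $l$. The convergence $M_\cD\to l$ can also be read off directly from Theorem~\ref{th:mosse_decentralizzate}: as $E(0)\to V_1$ the line through $E(0)$ parallel to $\overline{V_1V_2}$ tends to $\overline{V_1V_2}$, so in \eqref{eq:S}--\eqref{eq:Q} we get $S\to V_2$ and $Q\to V_1$, whence $M_\cD=\|S-Q\|+\|Q-V_1(0)\|\to l$. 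Reading the statement as a limit result, as is done everywhere in the paper for the evader's strategy, this produces games with $M_\cC=M_\cD$.

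The single delicate point is that this extremal configuration is intrinsically limiting: as $E(0)\to V_1$, the two pursuers $P_2(0),P_3(0)$ whose bisectors are the edges meeting at $V_1$ also tend to $E(0)$. Thus the argument must be run in the limit (in the spirit of the $\delta\to0$ convention of Section~\ref{sec:decentralized}), and one must check that Assumption~\ref{ass:inside_hull} and the fact that $\cV(0)$ is a genuine triangle persist all along the approach; it is also essential that the vertex used in $\underline B\ge\|V_2(0)-E(0)\|$ be the far endpoint $V_2(0)$ of the longest edge, since that is the one whose distance from $E(0)$ approaches $l$. I expect these verifications to be routine; once they are in place, the theorem follows from the elementary chain $\|V_2(0)-E(0)\|\le\underline B\le M_\cC\le M_\cD\le l$.
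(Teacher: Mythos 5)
Your proof is correct, and it reaches the conclusion by a genuinely different construction than the paper. The paper fixes the family of games by degenerating the \emph{shape} of the Voronoi cell: it takes a right triangle and lets the smallest edge $s\to 0$, then shows $M_\cD\to m$ (via $m\le M_\cD\le l=\sqrt{m^2+s^2}$) and $\underline B\to m$ by exploiting the fact that $P_1$, $E$ and $V_1$ become collinear, so that \eqref{eq:LB_general1} pinches against $M_\cD$. You instead keep the triangle $T$ fixed and nondegenerate, realize it as $\cV(0)$ by placing the pursuers as reflections of $E(0)$ across the edges, and degenerate the \emph{evader's position} $E(0)\to V_1$; the squeeze then needs only the crude ingredients $\underline B\ge\max_i\|V_i(0)-E(0)\|$ (exactly the inequality used in the proof of \eqref{eq:B_ge_l_mezzi}), $M_\cD\le l$ from \eqref{eq:MD_le_l}, and the chain \eqref{eq:disuguaglianze}, giving $\|V_2(0)-E(0)\|\le\underline B\le M_\cC\le M_\cD\le l$ with the left side tending to $l$. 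Both arguments are tightness-in-the-limit arguments whose limiting configuration is degenerate (the paper's triangle collapses to a segment; in yours two pursuers collapse onto the evader), so the standard of rigor is the same as the paper's; your route avoids the collinearity computation and shows the phenomenon occurs for \emph{any} triangle shape provided the evader starts near the far endpoint of the longest edge, whereas the paper's shows it for thin right triangles essentially independently of the evader's location. Two small points worth tightening: Assumption~\ref{ass:inside_hull} for your reflection construction follows in one line from the fact that the Voronoi cell equals the bounded set $T$ (if $E(0)$ were not interior to the pursuers' hull, the cell would contain a ray and be unbounded), and the remark that $\max_i\|V_i(0)-E(0)\|=l$ only at a vertex is not needed for the argument, since the whole proof is carried out along the approach $E(0)\to V_1$.
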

\begin{proof}
	To prove the theorem, we show that there exist games for which $M_\cD=\underline B$. In fact, by \eqref{eq:disuguaglianze}, $M_\cD=\underline B$ implies $M_\cC=M_\cD$. Let us choose a game initial condition such that $\cV$ is a right triangle and let us adopt the notation shown in Fig.~\ref{fig:right_triangle}. Since $\overline{SQ}$ is the hypotenuses of the triangle with vertices $S$, $Q$ and $V_3$, by Theorem~\ref{th:mosse_decentralizzate}, one easily gets $M_\cD\ge\|V_1-V_3\|=m$. Moreover, by \eqref{eq:MD_le_l}, it holds
	$$
	m\le M_\cD\le l\ .
	$$
	Let the smallest edge $s$ shrink to 0. One has
	$$
	\lim_{s\to0}l=\lim_{s\to0}\sqrt{m^2+s^2}=m
	$$
	and hence
	\begin{equation}\label{eq:sx1}
	\lim_{s\to0}M_\cD=m\ . 
	\end{equation}
	Moreover, it is easy to show that as $s\to0$, $P_1,E$ and $V_1$ tend to be collinear. This implies that
	$$
	\lim_{s\to0}\left(\|V_1-P_1\|-\|V_1-E\|\right)=\lim_{s\to0}\|P_1-E\|\ .
	$$
	By \eqref{eq:LB_general1}, one has
	\begin{align}
	\lim_{s\to0}\underline B&\ge \lim_{s\to0}\left(\|V_1-E\|+\frac{1}{2}(\|V_1-P_1\|-\|V_1-E\|)\right)\nonumber\\
	&= \lim_{s\to0}\left(\|V_1-E\|+\frac{1}{2}\|P_1-E\|\right)=m\ .\label{eq:sx2}
	\end{align}
	Since $M_\cD\ge\underline B$, by \eqref{eq:sx1} and \eqref{eq:sx2} one has
	$$
	\lim_{s\to0} M_\cD= \lim_{s\to0} \underline B=m
	$$
	which concludes the proof.
	\begin{figure}[tb]
		\centering %
		\includegraphics[width=0.8\columnwidth]{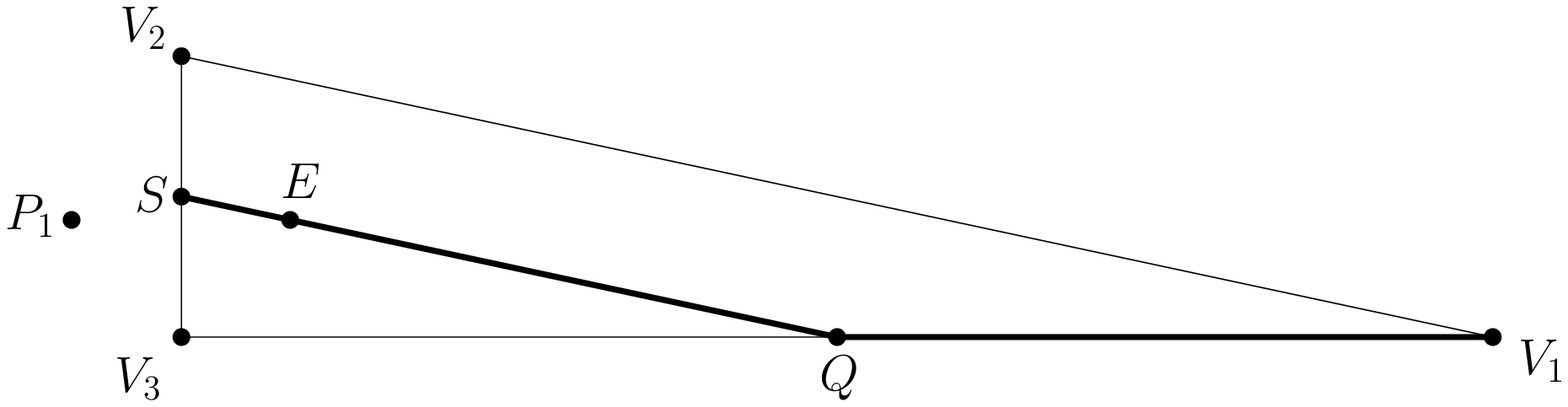}
		\caption{As $\|V_2-V_3\|\to0$, one has $M_\cD\to\underline B$ and hence $M_\cD=M_\cC$. The length of the bold line is equal to $M_\cD$.} \label{fig:right_triangle}
	\end{figure}
	\begin{figure*}[!t]
		\centering %
		\includegraphics[width=.95\columnwidth]{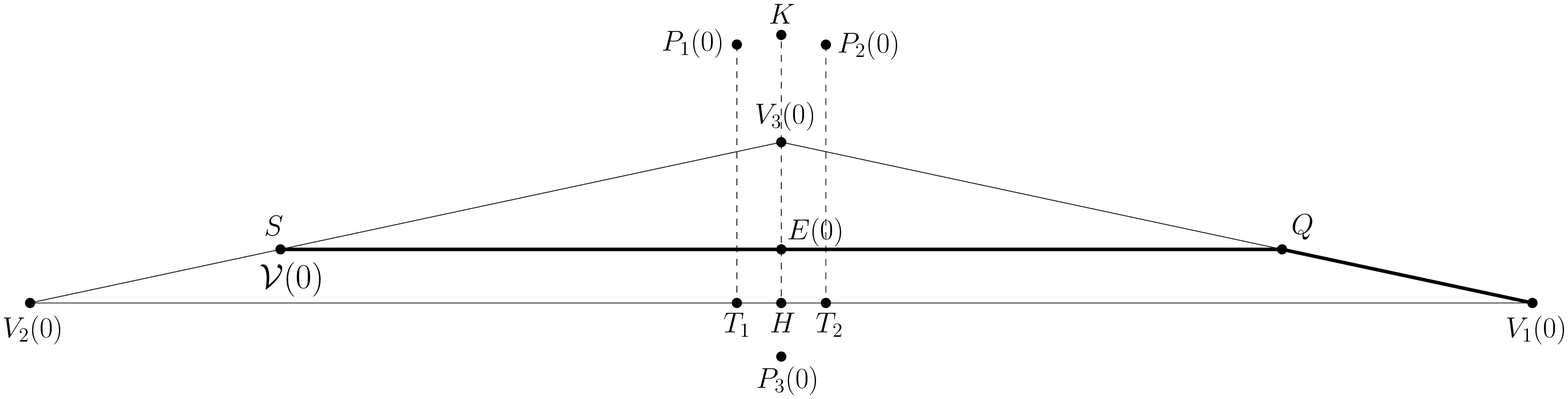}
		\caption{Voronoi cell $\cV$ at time 0. As $\|V_3-H\|\to0$, one has $M_\cD\to 2 M_\cC$. The length of the bold line is equal to $M_\cD$.} \label{fig:fat_triangle}
	\end{figure*}
	\begin{figure*}[!t]
		\centering %
		\includegraphics[width=.95\columnwidth]{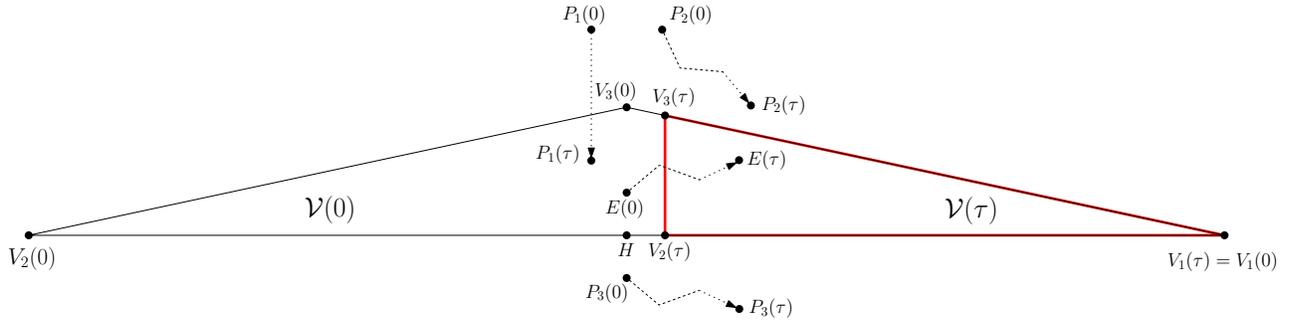}
		\caption{First step of strategy $\widehat\cC$. Voronoi cell $\cV$ at time 0 (black) and at time $\tau$ (red).} \label{fig:right_triangle_bis}
	\end{figure*}
\end{proof}

\begin{theorem}\label{th:tight_dx}
	There exists games such that $M_\cD=2 M_\cC$.
\end{theorem}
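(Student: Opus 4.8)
The plan is to produce a one--parameter family of games in which $M_\cD\to l$ while a suitable cooperative strategy captures the evader in time $\to l/2$; since \eqref{eq:disuguaglianze} gives $M_\cD\le 2M_\cC$, such a family forces $M_\cC\to l/2$ and hence $M_\cD=2M_\cC$ in the limit, in the same asymptotic sense as Theorem~\ref{th:tight_sx}. The family (see Fig.~\ref{fig:fat_triangle}) consists of ``fat'' Voronoi cells: fix the longest edge $\overline{V_1V_2}$ of length $l$, let $H$ be its midpoint, let the height $h$ of $V_3$ above $\overline{V_1V_2}$ tend to $0$ with $V_3\to H$ (so $\overline{V_1V_2}$ stays the longest edge and $\|V_1-V_3\|,\|V_2-V_3\|\to l/2$), and place $E$ inside $\cV$ with $E\to H$ and with distance $\xi=o(h)$ from $\overline{V_1V_2}$. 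The pursuers are the reflections of $E$ across the three edges of $\cV$; one checks this yields an admissible configuration ($E$ strictly inside $\cV$, etc.).

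The first step is to evaluate the limits. By Theorem~\ref{th:mosse_decentralizzate}, $M_\cD=\|S-Q\|+\|Q-V_1\|$; a similar--triangles computation gives $\|S-Q\|=l\,(1-\xi/h)\to l$ and $\|Q-V_1\|=O(\xi/h)\to 0$, hence $M_\cD\to l$. For the sake of interpretation (it shows the cooperative strategy below is optimal) I would also check, via \eqref{eq:LB_general1}, that $\underline B\to l/2$: for $i=1,2$ one has $\|V_i-E\|\to l/2$, and since $P_i$ is the reflection of $E$ across the line $\ell_i$ of the edge opposite $V_i$, the quantity $\|V_i-P_i\|^2-\|V_i-E\|^2$ equals four times the product of the distances from $V_i$ and from $E$ to $\ell_i$, and both distances vanish as the triangle flattens; hence the terms $i=1,2$ tend to $l/2$, while the term $i=3$ tends to $0$.

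The core of the proof is the cooperative strategy $\widehat\cC$ and the bound $M_\cC\le l/2+o(1)$. The governing observation is that, to survive appreciably longer than $O(h)$, the evader must head for one of the endpoints $V_1,V_2$ of the longest edge (reaching $V_3$ is worthless), and that the two races are asymptotically tied: $\|P_i(0)-V_i(0)\|-\|E(0)-V_i(0)\|=O(h^2)\to 0$ for $i=1,2$, both distances tending to $l/2$. Accordingly, $\widehat\cC$ keeps all three pursuers on the $\cD$--strategy except that, once the evader commits to a side, the pursuer associated with the corresponding endpoint $V_i$ switches to racing straight toward $V_i(0)$, while the other two keep contracting $\cV$ from the $V_3$ side, turning the flat cell into the right triangle of Fig.~\ref{fig:right_triangle_bis} during this first stage. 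Under the $\cD$--behavior an evader drifting toward $V_3$ is caught in time $O(h)$; an evader dashing toward a corner is met there by the racing pursuer within distance $l/2+O(h^2)$ of its start; and no mixed trajectory does better. Hence $\widehat\cC$ captures in time $l/2+o(1)$, so $M_\cC\le l/2+o(1)$; combined with $M_\cC\ge M_\cD/2\to l/2$ this gives $M_\cC\to l/2$, and therefore $M_\cD\to l=2\lim M_\cC$, i.e.\ $M_\cD=2M_\cC$ in the limit.

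I expect the last paragraph to contain essentially all the difficulty. Writing a fully specified feedback law $\widehat\cC$ and, above all, proving that it guarantees capture by time $l/2+o(1)$ against \emph{every} evader strategy --- ruling out evasive patterns such as running toward $V_3$, oscillating, or aiming at the interior of an edge and only then peeling off toward a corner --- requires tracking the evolution of $\cV(t)$ under $\widehat\cC$ and showing that the evader stays confined to a region whose extent along the longest edge shrinks quickly enough. The reduction, after the first stage, to a right--triangle configuration (Fig.~\ref{fig:right_triangle_bis}) is presumably what keeps this bookkeeping tractable.
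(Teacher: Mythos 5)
Your choice of the game family and the two limit computations are fine and essentially coincide with the paper's construction (a flat isosceles cell with the evader near the midpoint of the longest edge; your condition $\xi=o(h)$ is in fact the careful way to guarantee $\|S-Q\|+\|Q-V_1\|\to l$, which the paper's Fig.~\ref{fig:fat_triangle} configuration needs implicitly). The problem is that everything you defer to the last paragraph is the actual content of the theorem: you never establish the existence of a cooperative strategy with guaranteed capture time $l/2+o(1)$, and the law you sketch does not obviously provide one. If the pursuer $P_i$ associated with the corner the evader ``commits to'' abandons its mirroring duty and races straight to $V_i(0)$, the edge of $\cV$ for which it is responsible is no longer maintained: the evader's cell is then bounded only by the two remaining bisectors, which are nearly parallel lines meeting at the opposite corner, i.e.\ a thin wedge of opening angle $O(h/l)$ that is \emph{unbounded} in the direction away from $V_i$. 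An evader that feints (moves toward $V_i$ just long enough to trigger the race, then runs the other way) escapes into this wedge; the racing pursuer, having equal speed, cannot close the gap it has created, and the two mirroring pursuers can only shrink the wedge at a rate proportional to its angle, so capture within $l/2+o(1)$ is not guaranteed --- with the literal rule ``race to $V_i(0)$'' one can arrange survival on the order of $l$, no better than $M_\cD$. Likewise, the claim that an evader ``drifting toward $V_3$ is caught in time $O(h)$'' under $\cD$-behavior is not justified: the $\cD$-guarantee from the flat configuration is only $\approx l$, and nothing forces the evader to persist in that direction. So the key inequality $M_\cC\le l/2+o(1)$ is asserted, not proved.

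The paper avoids this entirely by making the cooperative phase last only $O(\eps)$: since $P_1(0)$ lies within $2\eps$ of the line of the longest edge ($\|P_1(0)-T_1\|\le 2\eps$), $P_1$ simply moves orthogonally onto that line while $P_2,P_3$ mirror the evader so that the vertex $V_1$ and the directions $v_{12},v_{13}$ are preserved; as soon as $P_1-E$ is parallel to $V_1-V_2$, the cell has become (in the limit) the degenerate right triangle of Theorem~\ref{th:tight_sx} with horizontal extent at most $\|V_1(0)-H\|=l/2$, and from then on the pursuers play the plain $\cD$-strategy, whose capture-time guarantee from Theorem~\ref{th:mosse_decentralizzate} holds against \emph{every} evader behavior. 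This removes any need for a ``commit'' trigger or a case analysis over evader trajectories (only the harmless half-plane assumption, symmetric in the two sides, is used). To repair your argument you would either have to fully specify your racing law and prove it is immune to feints --- which, as sketched, it is not --- or replace the first stage by a short cooperative move of this edge-dropping type and then invoke the decentralized guarantee, which is exactly the paper's route.
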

\begin{proof}
	We show that there exist games in which $M_\cC=\underline B$ and $M_\cD=2\underline B$. Let $\cV(0)$ be the isosceles triangle depicted in Fig.~\ref{fig:fat_triangle}. Define $H,T_1,T_2$ the projections of $V_3(0),P_1(0),P_2(0)$ on $\overline{V_1(0) V_2(0)}$, respectively. Let the evader initially lie on the segment $\overline{V_3(0) H}$, define $\eps=\|V_3(0)-H\|$ and $K=E(0)+2(V_3(0)-E(0))$. Since
	$\|V_3(0)-P_1(0)\|=\|V_3(0)-E\|=\|V_3(0)-K\|$ one has
	\begin{align}\label{eq:2eps}
	\|P_1(0)-T_1\|&\le\|P_1(0)-V_3(0)\|+\|V_3(0)-H\|\nonumber\\
	&=\|V_3(0)-K\|+\|V_3(0)-H\|\nonumber\\
	&\le 2 \|V_3(0)-H\|=2\eps\ .
	\end{align}
	Now, let $\eps$ tend to 0. In Fig.~\ref{fig:fat_triangle}, $M_\cD$ corresponds to the length of the bold line. By following a similar reasoning as in the proof of Theorem~\ref{th:tight_sx}, one has
	\begin{equation}\label{eq:MDproof}
	\lim_{\eps\to0} M_\cD=\lim_{\eps\to0}\|S-Q\|+\|Q-V_1(0)\|=
	\|V_2(0)-V_1(0)\|\ .
	\end{equation}
	
	Let us now introduce a two-step cooperative pursuers' strategy, denoted by $\widehat\cC$. Let $v_{QE}=Q-E(0)$. Assume first that the evader keeps moving in the half-plane $v_{QE}'e(t)\ge0$ for a time of at least $2\eps$, i.e., the evader moves to the right in Fig.~\ref{fig:fat_triangle} (the case the evader moves in the half-plane $v_{QE}'e(t)\le0$ is similar). Since $\eps\to0$, such assumption is not restrictive.
	The strategy $\widehat\cC$ works as follows.
	\begin{enumerate}
		\item Initially, $P_1$ moves towards $T_1$, while $P_2$ and $P_3$ move symmetrically to the evader w.r.t. $\overline{V_1(0)V_3(0)}$ and $\overline{V_1(0)V_2(0)}$, respectively, see Fig.~\ref{fig:right_triangle_bis}.
		\item As soon as $(P_1-E)$ is parallel to $(V_1-V_2)$, the pursuers play the decentralized strategy $\cD$ until capture occurs. 
	\end{enumerate}
	Notice that in the first step, $P_2$ and $P_3$ move in such a way to guarantee that $V_1$, $v_{12}$ and $v_{13}$ remain the same, where $v_{12}$ and $v_{13}$ are defined as in \eqref{eq:v_i}.
	
	Let $\tau$ be the time at which $\widehat\cC$-strategy switches to step 2 and by $\eta$ the remaining time to conclude the game. Since by \eqref{eq:2eps} $\|P_1(0)-T_1\|\le 2\eps$, one has $\tau\le2\eps$. So, $\cM_{\widehat\cC}=\tau+\eta\le 2\eps+\eta$.
	
	Notice that at time $\tau$ the Voronoi cell becomes a right triangle like the one reported in the proof of Theorem~\ref{th:tight_sx}, see Fig.~\ref{fig:right_triangle_bis}. So, by Theorem~\ref{th:tight_sx}, one has $\lim_{\eps\to 0}\eta=\|V_1(\tau)-V_2(\tau)\|$. Then, as $\eps$ tends to 0, it holds
	\begin{align}
	\lim_{\eps\to0}\cM_{\widehat\cC}&=\lim_{\eps\to0}\tau+\|V_1(\tau)-V_2(\tau)\|\nonumber\\
	&\le \lim_{\eps\to0} 2\eps+\|V_1(\tau)-V_2(\tau)\|\nonumber\\
	&= \lim_{\eps\to0} \|V_1(\tau)-V_2(\tau)\|\nonumber\\
	&\le \|V_1(0)-H\|\nonumber
	\end{align}
	where the last inequality comes from the fact that  $V_1(\tau)=V_1(0)$.
	Thus, by \eqref{eq:MDproof} and by the fact that the optimal cooperative strategy is such that $M_\cC \le M_{\widehat \cC}$, one has
	\begin{align}
	\lim_{\eps\to0}M_\cC &\le \lim_{\eps\to0}M_{\widehat \cC}\le \|V_1(0)-H\|=\frac{1}{2}\|V_2(0)-V_1(0)\|\nonumber\\
	&=\frac{1}{2}\lim_{\eps\to0}M_\cD\ .\nonumber
	\end{align}
	Since by \eqref{eq:main_result}, $M_\cC\ge M_\cD/2$, one gets
	$$
	\lim_{\eps\to0}M_\cC=\frac{1}{2}\lim_{\eps\to0}M_\cD
	$$
	which concludes the proof.
\end{proof}

\begin{remark}
	Notice that the two-step pursuers' strategy $\widehat\cC$ adopted in the proof of Theorem~\ref{th:tight_dx} is cooperative because in the first step $P_1$ moves orthogonally to the largest edge of $\cV(0)$. Since the vertices $\cV(0)$ are defined by the position of all the pursuers (and the evader), it is apparent that such strategy requires complete knowledge of the game state.
\end{remark}

\section{Examples}\label{sec:example}

Let us define $\delta=\frac{M_\cC}{M_\cD}$. By Theorem~\ref{th:main}, one has that in general
$
0.5\le\delta\le 1\ .
$

By Theorem~\ref{th:tight_dx}, one has that there exist games such that $\delta=0.5$, which means that playing a cooperative strategy for the pursuers halves the game duration w.r.t. the decentralized strategy. 

In this section, some examples are reported to show how the range of $\delta$ changes for some representative games. Clearly, the smaller is $\delta$, the larger improvement can be obtained by playing a cooperative strategy.

For a given game, $M_\cD$ and $\underline B$ can be easily computed by \eqref{eq:M_D} and \eqref{eq:LB_general2}, respectively. So, by \eqref{eq:disuguaglianze}, one has
\begin{equation}\label{eq:delta}
\frac{\underline B}{M_\cD}\le\delta\le 1\ .
\end{equation}
Define $\underline\delta=\underline B/M_\cD$. Clearly, $\underline \delta$ provides a lower bound to $\delta$, i.e., the maximum game length reduction which can be obtained by playing the strategy $\cC$ w.r.t. $\cD$.

In Fig.~\ref{fig:examples}, the initial Voronoi cell $\cV(0)$ and the evader position for different games are reported. For the sake of simplicity, the positions of the pursuers are not reported since they can be easily derived from $\cV(0)$ and $E(0)$.
As an example, if $\cV(0)$ is an equilateral triangle of side $l$ and $E$ lies in its geometrical center (see Fig.~\ref{fig:examples}-a), one has
$$
\underline B=\frac{\sqrt{3}}{2}l~,~M_\cD=l~,~\underline\delta =\frac{\sqrt{3}}{2}\ .
$$
Moreover, notice that by \eqref{eq:M_D}, when $\cV(0)$ is an equilateral triangle of side $l$ one has $M_\cD=l$ for any position of the evader inside $\cV$.

In Table~\ref{tab:examples}, the value of $\underline \delta$ for each case depicted in Fig.\ref{fig:examples} is reported. For instance, when the initial game configuration is the isosceles triangle c), the maximum benefit of playing in a cooperative way with respect to the $\cD$-strategy is less than $4\%$.

\begin{figure}[htb]
	\centering %
	\includegraphics[width=0.3\columnwidth]{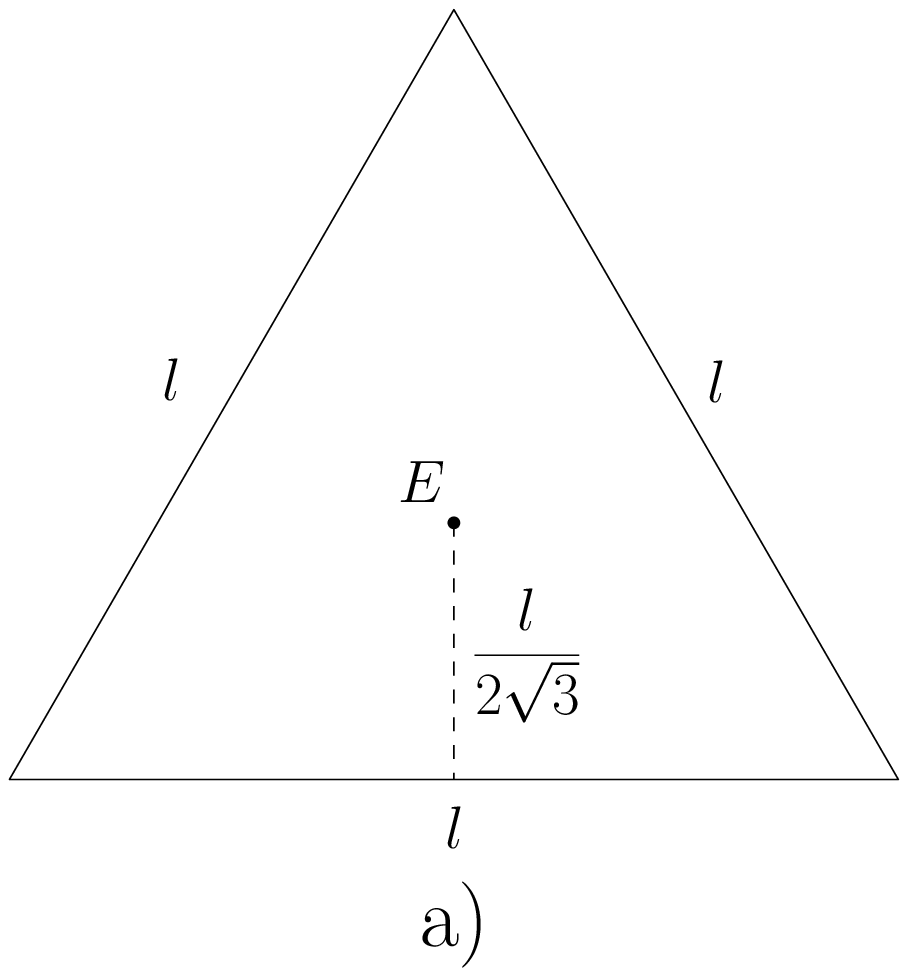}~~~
	\includegraphics[width=0.3\columnwidth]{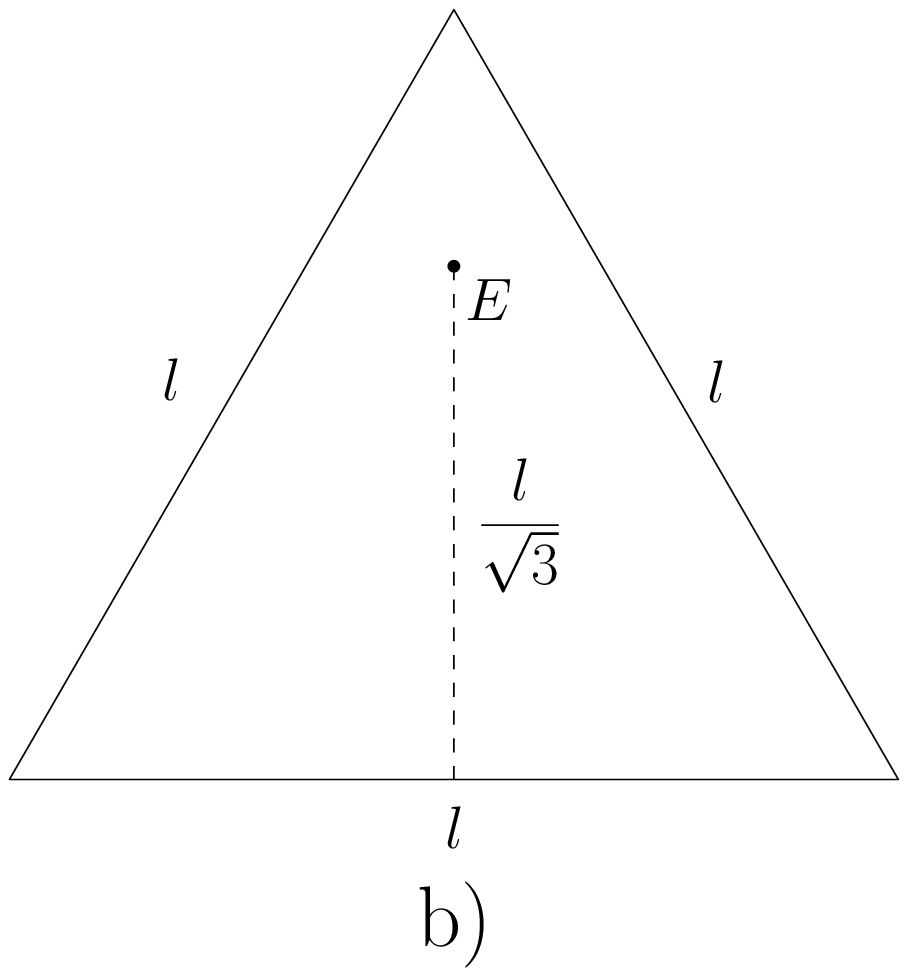}\\[5mm]
	\includegraphics[height=60mm]{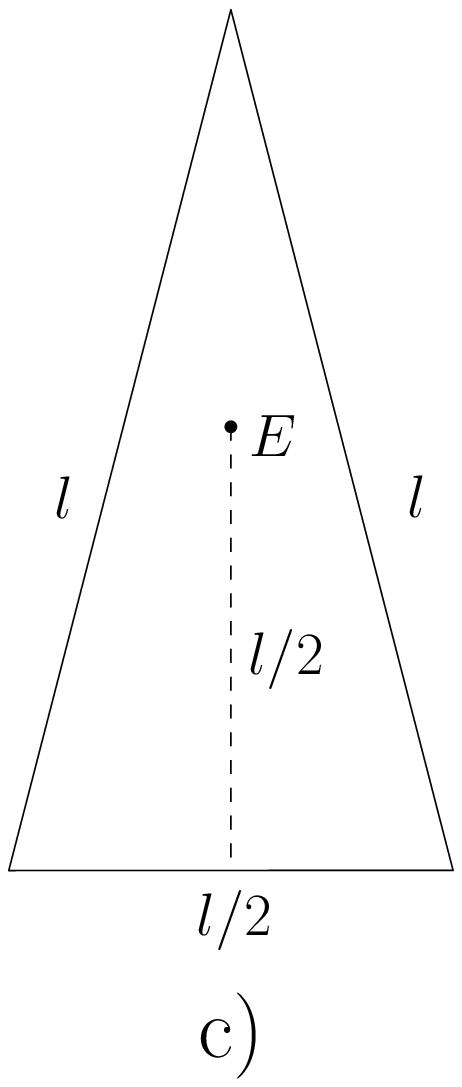}~~~~~~~~~~~~~~~~~
	\includegraphics[height=60mm]{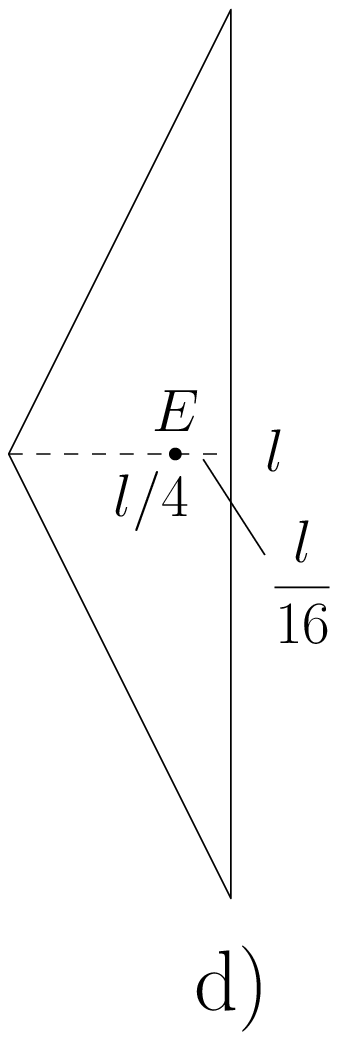}
	\caption{Examples of Voronoi cell $\cV(0)	$.} \label{fig:examples}
\end{figure}

\begin{table}[bt]
	\caption{Ratio $\underline \delta$ for the different games in Fig.\ref{fig:examples}}
	\begin{center}
		\begin{tabular}{|c|c|}
			\hline
			$\cV$ & $\underline \delta$ \\ \hline %
			equilateral triangle a) &   0.866 \\ \hline %
			equilateral triangle b) &   0.902 \\ \hline %
			isosceles triangle c) &   0.968 \\ \hline %
			isosceles triangle d) &   0.690 \\ \hline %
		\end{tabular}
	\end{center}
	\label{tab:examples}
\end{table}

\section{Conclusions}\label{sec:conclusions}
The advantage of cooperation in pursuit-evasion games has been investigated, for a game in the plane with three pursuers and one evader. The maximum reduction of the capture time that can be achieved by using full information on the pursuers' state has been derived. Such reduction has been also specified in terms of the game initial conditions.
This contribution can be seen as a first step towards a deeper understanding of the benefits provided by cooperation of the pursuers. Devising a cooperative pursuit strategy achieving, or at least approaching, the lower bound on the game length, is an open problem. The extension of the results presented in the paper to settings with more than three pursuers or with multiple evaders will be the subject of future research. Another challenging objective is to address multi-pursuer games in bounded environments (e.g., polygons), by building on recent results concerning the single-pursuer case \cite{casini_SCL19,casini_garulli_Automatica19}.

\bibliographystyle{IEEEtran}
\bibliography{lion_and_man}

\appendix[Proof of Theorem~\ref{th:mosse_decentralizzate}]

In the first step of the $\cE$-strategy, the evader goes from $E(0)$ to $Q$. So, step 1 takes a time $\tau_1=\|E(0)-Q\|$. In step~1, according to \eqref{decentralized_strategy_a}, $P_1$ and $P_3$ move in the same direction of the evader along $e(0)=v_{QE}$, while $P_2$ obeys to \eqref{decentralized_strategy_b} going towards $Q$. As a result, only the smallest edge of $\cV$ moves (along $e(0)$), while the others remain the same. The Voronoi cell at time $\tau_1$ is depicted in red in Fig.~\ref{fig:M_D_steps}.

Since $V_1(\tau_1)=V_1(0)\in\cV(\tau_1)$, in the second step of the $\cE$-strategy the evader moving along $v_{V_1 Q}$ may safely approach $V_1(0)$ at time $\tau_{12}=\tau_1+\tau_2$, with $\tau_2=\|Q-V_1(0)\|$. So, $V_1(\tau_{12})=V_1(0)$. Notice that, as in the previous step, only the smallest edge of $\cV$ is moving in the second step. In Fig.~\ref{fig:M_D_steps}, $\cV(\tau_2)$ is colored in blue.

During the final step, the evader points towards the farthest vertex of $\cV(\tau_2)$ moving along $v_{SE}$. Since the Voronoi cell at any time is a triangle similar to $\cV(0)$, the farthest vertex from $V_1(\tau_2)=V_1(0)$ turns out to be $V_2(\tau_2)$. By defining $\tau_3=\|V_1(0)-V_2(\tau_2)\|$, it is easy to see that $\tau_3=\|S-E(0)\|$ and then the total traveled time is $\tau_1+\tau_2+\tau_3$, which coincides with \eqref{eq:M_D}. At such a time, the Voronoi cell collapses to one point and capture occurs.

It remains to prove that the $\cE$-strategy is optimal when the pursuers play the $\cD$-strategy, i.e., there exists no other evader's strategy guaranteeing a longer survival. 

At a given time $\tau$, according to \eqref{eq:M_D}, let $M_\cD(\tau)$ denote the residual game length if the pursuers and the evader play the $\cD$-strategy and the $\cE$-strategy, respectively, from time $\tau$ onwards. Let the evader move along a direction $\widehat e\colon\|\widehat e\|=1$ for a time $\Delta\tau>0$, i.e., $\dot E(t)=\widehat e$, $\tau\le t\le\tau+\Delta\tau$. Let $M_\cD(\tau+\Delta\tau)$ be the corresponding residual game length at time $\tau+\Delta\tau$. Let us define
\begin{equation}\label{eq:deltaM}
\Delta M(\Delta\tau)=M_\cD(\tau+\Delta\tau)-M_\cD(\tau)
\end{equation}
As it has been shown above, if $\widehat e\in\{v_{QE},\,v_{V_1Q},\,v_{SE}\}$, one has $\Delta M(\Delta\tau)=-\Delta\tau$. Hence, along the three directions the evader follows in the $\cE$-strategy, it holds
$$
\frac{d M_\cD(t)}{dt}=\lim_{\Delta\tau\to0}\frac{\Delta M(\Delta\tau)}{\Delta\tau}=-1\ .
$$
In the following, we prove that for any direction $\widehat e\notin\{v_{QE},\,v_{V_1Q},\,v_{SE}\}$ one has $\frac{d M_\cD(t)}{dt}<-1$. So, the evader will be captured in a shorter time 
and hence any evader's strategy involving a move $\widehat e\notin\{v_{QE},\,v_{V_1Q},\,v_{SE}\}$ cannot be optimal.

Let $\widehat e=[\cos(\theta),\sin(\theta)]'$ with $\theta\in[0,2\pi]$. Assume the evader moves along direction $\widehat e$ for a time $\Delta \tau$. We want to compute $\frac{dM_\cD(t)}{dt}$ as a function of $\theta$. Let $v_{ij}$ be defined as in \eqref{eq:v_i}. Let us consider the six directions $\pm v_{12},\,\pm v_{13},\,\pm v_{23}$, and the resulting six angular intervals in which they partition the interval $[0,2\pi]$, as shown in Fig.~\ref{fig:six_directions}. Let $l=\|V_1-V_2\|$, $m=\|V_1-V_3\|$ and denote by $\varphi_1$ and $\varphi_2$ the angles associated to vertices $V_1$ and $V_2$, respectively.

\begin{figure}[htb]
	\centering %
	\includegraphics[width=0.8\columnwidth]{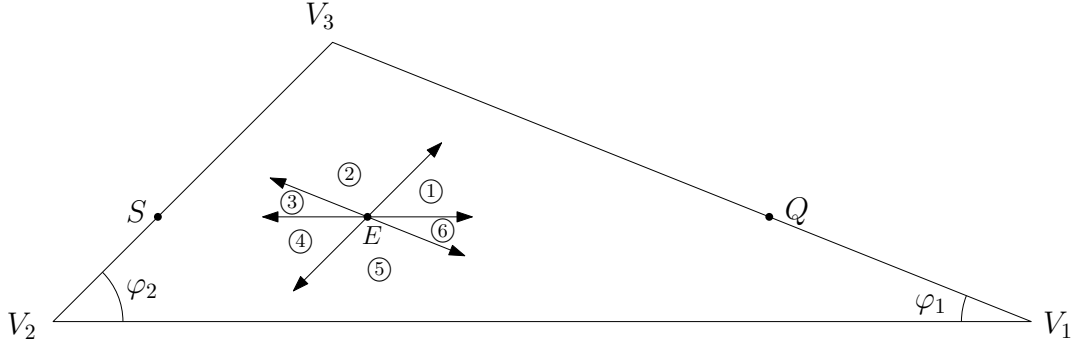}
	\caption{The six directions given by  $\pm v_{12},\,\pm v_{V_13},\,\pm v_{23}$ and the related angular intervals.} \label{fig:six_directions}
\end{figure}

Let us start by assuming $\theta\in[0,\varphi_2)$ and derive the expression of $\Delta M(\Delta\tau)$. Let us refer to Fig.~\ref{fig:evader_move}, where $\cV(\tau)$ and $\cV(\tau+\Delta\tau)$ are depicted in black and red, respectively. By \eqref{eq:M_D}, $M_\cD(\tau)=\|S(\tau)-Q(\tau)\|+\|Q(\tau)-V_1(\tau)\|$. It is easy to see that
$\|Q(\tau+\Delta\tau)-V_1(\tau+\Delta\tau)\|=\|Q(\tau)-V_1(\tau)\|$. 
Let $\widehat Q=Q(\tau)+\Delta\tau\widehat e$ and define $b=\|Q(\tau+\Delta\tau)-\widehat Q\|$, see Fig.~\ref{fig:evader_move}. One has
$$
\|S(\tau+\Delta\tau)-Q(\tau+\Delta\tau)\|=\|S(\tau)-Q(\tau)\|-b
$$
and hence
$$
\Delta M_\cD(\Delta\tau)\!=\!\|S(\tau+\Delta\tau)-Q(\tau+\Delta\tau)\|-\|S(\tau)-Q(\tau)\|\!=\!-b.
$$
\begin{figure}[tb]
	\centering %
	\includegraphics[width=0.8\columnwidth]{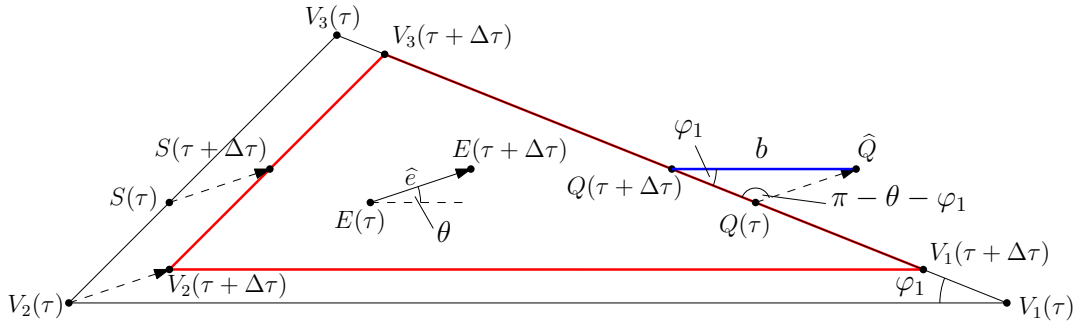}
	\caption{The evader moves along a direction belonging to the first interval. $\cV(\tau)$ and $\cV(\tau+\Delta\tau)$ are depicted in black and red, respectively. The length of the blue segment is equal to $M_\cD(\tau)-M_\cD(\tau+\Delta\tau)$.} \label{fig:evader_move}
\end{figure}
By the law of sines, one has
$$
\frac{b}{\sin(\pi-\theta-\varphi_1)}=\frac{\Delta\tau}{\sin(\varphi_1)}
$$
that is
$$
b=\Delta\tau\frac{\sin(\pi-\theta-\varphi_1)}{\sin(\varphi_1)}=\Delta\tau\frac{\sin(\theta+\varphi_1)}{\sin(\varphi_1)}\ .
$$
Thus, one has
$$
\Delta M_\cD(\Delta\tau)=-\Delta\tau\frac{\sin(\theta+\varphi_1)}{\sin(\varphi_1)}
$$
and hence
$$
\frac{d M_\cD(t)}{dt}=\lim_{\Delta\tau\to0}\frac{\Delta M(\Delta\tau)}{\Delta\tau}= -\frac{\sin(\theta+\varphi_1)}{\sin(\varphi_1)}\ .
$$
By using a similar reasoning, one can compute $dM_\cD(t)/dt$ for all the other cases.  Table~\ref{tab:casistica} reports the expressions of $d M_\cD(t)/dt$ for $\theta$ belonging to the six angular intervals.

By straightforward calculus arguments, it is possible to show that such a function has three maxima in $[0,2\pi)$, all equal to $-1$. As expected, they are achieved when $\theta$ is equal to $0$, $\pi$ and $2\pi-\varphi_1$, which correspond to the directions $v_{QE}$,  $v_{SE}$, $v_{V_1 Q}$ adopted in the $\cE$-strategy. Therefore, any other direction leads to a greater reduction of $M_\cD$ and thus it cannot be optimal.
\vspace*{-18pt}\flushright{$\blacksquare$}

\begin{table}[htb]\setlength\extrarowheight{4pt}
	\caption{Expressions of $d M_\cD(t)/dt$ as a function of $\theta$}
	\label{tab:casistica}
	\begin{center}
		\begin{tabular}{|c|c|c|}
			\hline
			Case & $\theta$ interval & $\frac{d M_\cD(t)}{dt}$ \\[3pt] \hline %
			1 & $[0,\varphi_2)$ 		   	   & $-\frac{\sin(\theta+\varphi_1)}{\sin(\varphi_1)}$ \\ \hline %
			2 & $[\varphi_2,\pi-\varphi_1)$ 	   & $-\frac{\sin(\theta+\varphi_1)}{\sin(\varphi_1)} - \frac{\sin(\theta-\varphi_2)}{\sin(\varphi_2)}  $ \\ \hline %
			3 & $[\pi-\varphi_1,\pi)$    	   & $-\frac{\sin(\theta-\varphi_2)}{\sin(\varphi_2)}$ \\ \hline %
			4 & $[\pi,\pi+\varphi_2)$    	   & $-\frac{\sin(\theta-\varphi_2)}{\sin(\varphi_2)} + \frac{\sin(\theta)}{\sin(\varphi_1)}$ \\ \hline %
			5 & $[\pi+\varphi_2,2\pi-\varphi_1)$ & $\frac{\sin(\theta)}{\sin(\varphi_1)}$ \\ \hline %
			6 & $[2\pi-\varphi_1,2\pi)$  	   & $\frac{\sin(\theta)}{\sin(\varphi_1)} - \frac{\sin(\theta+\varphi_1)}{\sin(\varphi_1)}$ \\ \hline %
		\end{tabular}
	\end{center}
	\label{tab:cases}
\end{table}
\end{document}